\titleformat{\section}{\Large\normalfont\bfseries}{\thesection.}{0.5em}{}
\newcommand{\fd}{\bf}
\numberwithin{equation}{section}
\newtheorem{theorem}{Theorem}[section]
\newtheorem{lemma}[theorem]{Lemma}
\newtheorem{corollary}[theorem]{Corollary}
\theoremstyle{definition}
\newtheorem{example}[theorem]{Example}
\def\R{{\mathbb R}}
\def\Z{{\mathbb Z}}
\newcommand{\eop}{\hfill$\square$}
\begin{document}

\includepdf[pages=-]{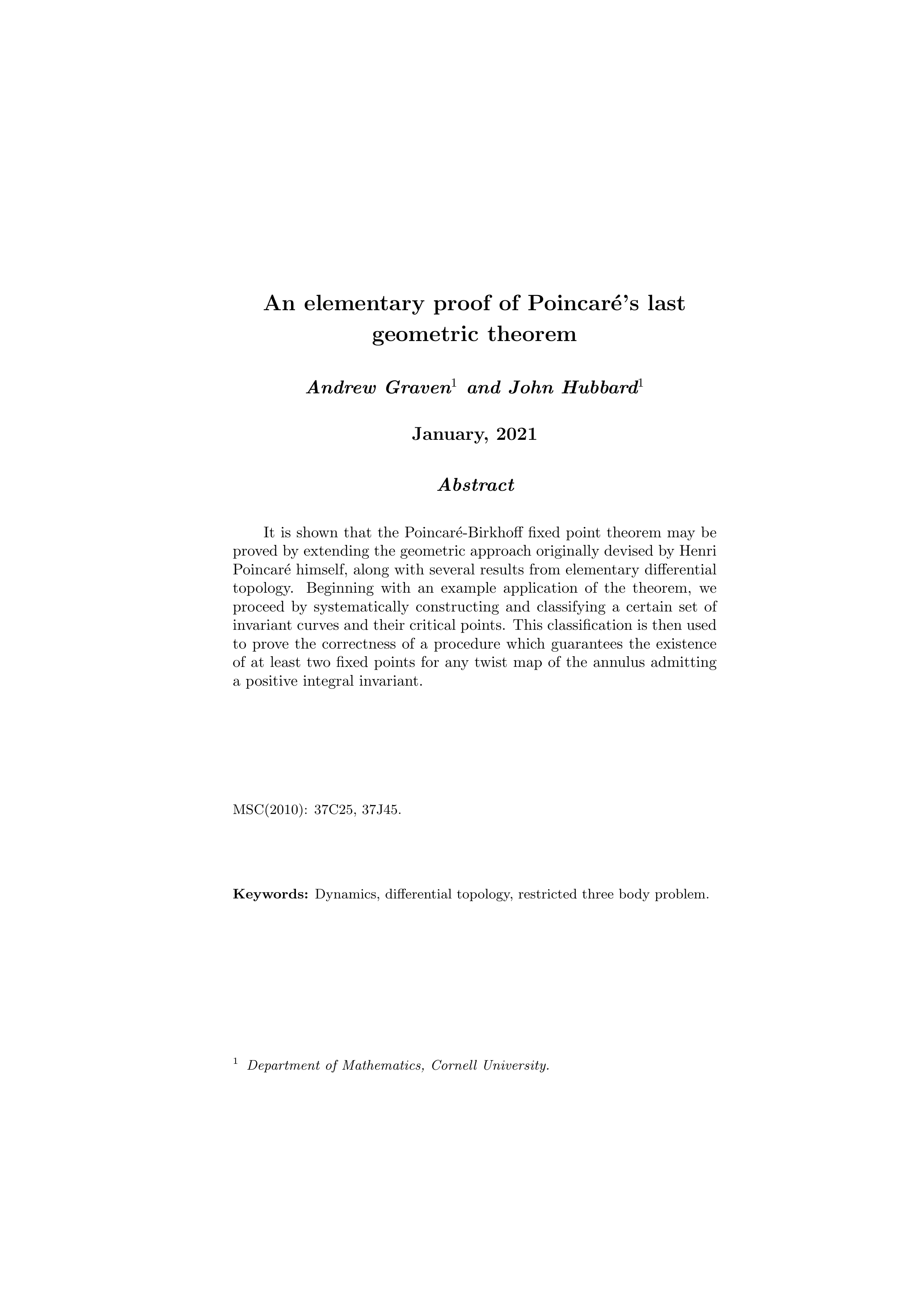}

\setcounter{page}{62}

\markboth {\small {\it Andrew Graven and John Hubbard}}{\small {\it An elementary proof of Poincar\'e's last geometric  theorem}}


\begin{displayquote} 
``J'ai demontr\'e il y a longtemps d\'ej\`a, 
l'existence des solutions p\'eriodiques du probl\`eme des trois corps; 
le r\'esultat laissait cependant encore a d\'esirer; 
car, si l'existence de chaque sorte de solution \'etait \'etablie pour les petites valeurs des masses, 
on ne voyait pas ce qui devait arriver pour des valeurs plus grandes, 
quelles \'etaient celles de ces solutions qui subsistaient et dans quel ordre elles disparaissaient. 
En r\'efl\'echissant a cette question, je me suis assur\'e que la reponse devait d\'epen-dre 
de l'exactitude ou de la fausset\'e d'un certain th\'eor\`eme de geom\'etrie dont l'\'enonce est tr\`es simple, 
du moins dans le cas du probl\`eme restreint et des probl\`emes de Dynamique o\`u il n'y a que deux degr\'es de libert\'e.''
\footnote{I demonstrated long ago the existence of periodic solutions to the three-body problem; 
however, the result still left something to be desired; 
for, if the existence of each class of solution was established for small values of the masses, it was not clear what happens for larger values: which of these solutions remained and in what order they disappeared. Reflecting on this question, 
I convince myself that the answer depends 
on the truth or untruth of a certain geometric theorem whose statement is very simple, 
at least in the case of the restricted problem and problems of dynamics 
in which there are only two degrees of freedom.} 
 
{} \hfill Henri Poincar\'e, 1912 \cite{PoincareProof}
\end{displayquote}

\begin{section}{Introduction}
In this paper, 
we present an elementary proof of the Poincar\'e-Birkhoff fixed point theorem, 
otherwise known as ``Poincar\'e's last geometric theorem''. 
The theorem roughly states that any measure-preserving diffeomorphism of the annulus which twists 
the inner and outer boundaries 
in opposite directions has at least two fixed points. 
Poincar\'e originally conjectured this in his 1912 paper ``Sur un Th\'eor\'eme de G\'eom\'etrie'' \cite{PoincareProof}, 
in which he presents an elegant geometric proof in several special cases. 
However he did not succeed in proving the theorem in general.

It was not until after Poincar\'e's death that George David Birkhoff published 
the first ostensibly complete proof in 1913 \cite{birkhoff1913}. 
Unfortunately, Birkhoff's argument for the existence of the second fixed point relied on 
a fallacious application of the Poincar\'e theorem \cite{PoincareTheoremTopo}, 
known in its more general case as the Poincar\'e-Hopf index theorem 
(see Guillemin and Pollack \cite[page 134]{G&P}). 
In particular, the Poincar\'e theorem implies that 
the indices of the fixed points of $f$ must sum to zero. 
Thus, if $f$ has at least one fixed point, and this fixed point is of non-zero index, 
then there must exist at least one additional fixed point. 
However, this neglects the possibility of the fixed point having index zero. 
Birkhoff ultimately presented a correct proof of the general case of the theorem in his 1926 paper 
``An Extension of Poincar\'e’s Last Geometric Theorem'' \cite{birkhoff1926}, 
taking an analytic approach distinct from that of Poincar\'e. We show that, by applying several elementary results in differential topology, 
one can prove the general case of the theorem, 
including the existence of the second fixed point, along the lines of Poincar\'e's original argument.
\end{section}

\begin{section}{Statement of the theorem}

Let $A=\R/\Z \times [0,1]$ be the standard annulus, with $x$ the $1-$periodic coordinate and $y$ the radial coordinate, 
with universal cover $\widetilde A=\R\times [0,1]$. 
Let $f:A \to A$ be a $C^1$ diffeomorphism mapping each boundary component to itself, and  
$\widetilde f:\widetilde A \to \widetilde A$ be a lift to the universal cover.

Write $\widetilde f(x,y)= (\tilde{f}_1(x,y), \tilde{f}_2(x,y))$.  
The map $\widetilde f$ is called a {\fd twist map} if the two conditions,
$\tilde{f}_1(x,0)-\tilde{f}_1(0,0)<x$ and $\tilde{f}_1(x,1)-\tilde{f}_1(0,1)>x$ are satisfied for all $x\in \R$.      
This is independent of the choice of lift and, 
as a consequence of periodicity, only needs to be checked for $x\in [0,1]$. 
We also call $f$ a {\bf twist map} if $\tilde{f}$ is a twist map. 

In addition, we say $f$ {\bf admits a positive integral invariant} 
if there exists a function $d\mu:A\rightarrow\mathbb{R}$ such that $d\mu>0$ for almost every $x\in A$ and 
the measure associated with $d\mu$, namely $\mu(U)=\int_Ud\mu$, 
satisfies $\mu(U)=\mu(f(U))$ for all measurable $U\subseteq A$.

\begin{theorem}\label{thm:PLGT}
If $f:A\rightarrow A$ is a twist map admitting a positive integral invariant, then $f$ has at least two fixed points.
\end{theorem}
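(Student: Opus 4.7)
The centerpiece of the proof is the pair of displacement functions
\[
\phi(x,y)=\tilde f_1(x,y)-x,\qquad \psi(x,y)=\tilde f_2(x,y)-y.
\]
Both are $\Z$-periodic in $x$ by equivariance of $\tilde f$ under the deck group, and therefore descend to continuous functions on $A$; a point of $A$ is fixed by $f$ precisely where $\phi$ and $\psi$ both vanish. After an appropriate integer shift of the lift, the twist condition can be rephrased as $\phi<0$ on the inner boundary and $\phi>0$ on the outer boundary, so the level set $Z=\phi^{-1}(0)$ separates the two boundary circles of $A$.

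I would next replace $f$ by a $C^1$-small area-preserving perturbation (for instance via a mollification carried out so as to preserve $\mu$ and the twist condition) so that, by Sard's theorem, $0$ becomes a regular value of $\phi$. Then $Z$ is a smooth properly embedded 1-submanifold of $A$, and since $\phi$ has opposite signs on the two boundary components, at least one connected component $C\subset Z$ is an essential simple closed curve. On $C$ we have $\tilde f_1(p)=p_x$, so $f$ preserves the $x$-coordinate of every point of $C$, and fixed points of $f$ lying on $C$ are exactly the zeros of the continuous function $\psi|_C:C\to\R$.

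To force such zeros I would exploit the invariance of $\mu$. Choosing $C$ carefully---for instance, as the outermost essential component of $Z$---the open region $V\subset A$ bounded by $C$ and the outer boundary satisfies $\phi>0$ throughout. If $\psi|_C\ge 0$ everywhere and is strictly positive somewhere, then $f(C)$ lies in the closure of $V$ and strictly inside $V$ on a set of positive measure, forcing $\mu(f(V))<\mu(V)$ and contradicting invariance; the opposite case is symmetric. Hence $\psi|_C$ either vanishes identically (yielding an entire circle of fixed points) or changes sign on $C$, and a continuous function that changes sign on a circle must have at least two zeros---giving at least two fixed points of the perturbed map.

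The main obstacle is transferring this conclusion back to the original $f$. Fixed points are upper semicontinuous under limits, but two distinct approximating fixed points can merge into a single fixed point of $f$ of higher topological index, leaving only one---precisely the gap that undermined Birkhoff's 1913 argument. To close it I would combine the convergence argument with an index count, applying Poincar\'e-Hopf to the displacement vector field $(-\phi,-\psi)$: by the twist condition and boundary preservation this field is nonvanishing on $\partial A$, so the total index of its zeros (that is, of the fixed points of $f$) must vanish. A coalesced fixed point of nonzero index is therefore necessarily accompanied by a second fixed point elsewhere in $A$, and the hypothesis of a positive integral invariant should enter essentially in ruling out the degenerate case of a single collapsed fixed point of index zero.
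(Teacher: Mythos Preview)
Your outline captures the right objects---the displacement functions, the level set $Z=\phi^{-1}(0)$, the essential component $C$, and the appeal to $\mu$-invariance---and this is indeed the skeleton of the paper's argument. But the proposal has a genuine gap at its central step, and the paper's proof is largely devoted to filling exactly that gap.

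The unjustified claim is: ``If $\psi|_C\ge 0$ everywhere and is strictly positive somewhere, then $f(C)$ lies in the closure of $V$.'' On $C$ the map $f$ does act by vertical translation, but $C$ need not be a graph over $x$; as soon as $C$ has a vertical tangent (a ``fold''), moving upward from a point of $C$ can land you in the \emph{wrong} complementary region $W$ rather than in $V$. So $\psi|_C>0$ does not force $f(C)\subset\overline V$, and hence does not by itself produce a $\mu$-contradiction. The paper flags this explicitly (Figure~8 and the discussion opening Section~5) and then spends the bulk of Section~5 building a substitute curve: assuming each component of $\mathcal I$ maps monotonically up or down, it classifies the non-degenerate critical points by ``type'', threads vertical segments between components according to a combinatorial rule (Lemma~5.1), and proves by exhaustive casework that the resulting closed loop never crosses its image (Lemma~5.2). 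That construction is the missing idea in your sketch.

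Two secondary points. First, you perturb $f$ itself to make $0$ regular and then face the transfer problem back to the original map; the paper sidesteps this entirely by parametrizing the \emph{projection} instead, setting $\alpha_\phi(x,y)=\phi x-y$ and applying parametric transversality to $F_\phi=\alpha_\phi\circ f-\alpha_\phi$, so no limit is ever taken. Second, your plan for the second fixed point---Poincar\'e--Hopf plus ``the invariant measure should rule out index zero''---is precisely the 1913 Birkhoff strategy the paper identifies as incomplete; you do not indicate how $\mu$ actually excludes a single index-zero fixed point. The paper's Section~6 instead excises a small ball $B_\epsilon(x^\ast)$ around the assumed unique fixed point, reruns the Section~5 construction on $A\setminus B_\epsilon(x^\ast)$, shows the resulting loop has nonzero winding number and hence a definite area discrepancy $K>0$ independent of $\epsilon$, and lets $\epsilon\to 0$ to make the error inside the ball smaller than $K$.
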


\begin{corollary}[Poincar\'e's Last Geometric Theorem \cite{PoincareProof}]\label{thm:PLGTCorr1}
If $f$ is an area preserving twist map, then $f$ has at least two fixed points.\eop
\end{corollary}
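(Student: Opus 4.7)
The plan is to derive the corollary directly from Theorem~\ref{thm:PLGT} by exhibiting a canonical positive integral invariant whenever $f$ is area preserving. Concretely, I would let $d\mu:A\to\R$ be the constant function $d\mu\equiv 1$, so that the associated measure $\mu(U)=\int_U dx\,dy$ coincides with the standard Lebesgue area on $A$ inherited from $\widetilde A\subset\R^2$.

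It then remains only to check the two clauses from the definition of a positive integral invariant. Pointwise positivity is immediate, since $d\mu\equiv 1>0$ everywhere on $A$, and in particular for almost every $x\in A$. The invariance condition $\mu(U)=\mu(f(U))$ for every measurable $U\subseteq A$ is just a restatement of the assumption that $f$ preserves area. Consequently $f$ satisfies all the hypotheses of Theorem~\ref{thm:PLGT}, and the existence of at least two fixed points follows immediately.

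I do not anticipate any substantive obstacle in this step: the corollary is a pure repackaging of Theorem~\ref{thm:PLGT} in the classical area-preserving setting that Poincar\'e originally considered. All of the real work, namely producing two fixed points from the twist condition together with a positive invariant measure, is carried out inside the proof of Theorem~\ref{thm:PLGT} itself; nothing new is needed here beyond identifying Lebesgue area as an allowable choice of $d\mu$.
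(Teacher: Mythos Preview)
Your proposal is correct and matches the paper's treatment: the corollary is stated with an immediate \eop\ and no separate proof, precisely because taking $d\mu\equiv 1$ (Lebesgue area) trivially satisfies the definition of a positive integral invariant and reduces the statement to Theorem~\ref{thm:PLGT}. Nothing further is needed.
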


\end{section}

\begin{section}{Why we care} 

Poincar\'e was interested in this result in order to prove the existence of periodic motions in the restricted 3-body problem 
\cite{PoincareProof,PoincareReview,BB}. 
The example of the forced pendulum below is exactly this sort of problem, 
and we find that the result nicely fills in   
the KAM theorem picture, at least for systems with two degrees of freedom 
\cite{KAM}.  

\begin{example} \rm 
The equation we will use for the forced pendulum is 
\[
x''+\sin x = a \cos t.
\]
This may not look like a Hamiltonian system, but it is if we add a variable $s$ conjugate to $t$, leading to
\[
H\begin{pmatrix}x\\y\end{pmatrix}= \frac {y^2}2-\cos x-ax\cos t +s,
\]
\begin{equation}\label{pendulumhamiltoniansystem}
\begin{split}
x'&=\frac{\partial H}{\partial y}= y \\
y'&=-\frac{\partial H}{\partial x}= -\sin x+a \cos t \\
\end{split}
\qquad
\begin{split}
t'&=\frac{\partial H}{\partial s}= 1 \\
s'&= -\frac{\partial H}{\partial t}= ax\sin t.
\end{split}
\end{equation}

The period map $P:\R^2 \to \R^2$ is given by $P(x,y)=\varphi_{2\pi}(x,y)$, where $\varphi_t$ is the time $t$ flow of the vector field
\[
\begin{pmatrix} x'\\y'\end{pmatrix}= \begin{pmatrix} y\\ -\sin x + a\cos t \end {pmatrix}.
\]
The function $P$ is an area preserving map of the plane, either because the vector field has vanishing divergence, or because the $(x,y)$-plane is a Poincaré section for the Hamiltonian system (\ref{pendulumhamiltoniansystem}). 

\begin{figure}[ht!]
\centering
\begin{subfigure}[b]{0.475\linewidth}    
   \includegraphics[width=\linewidth] {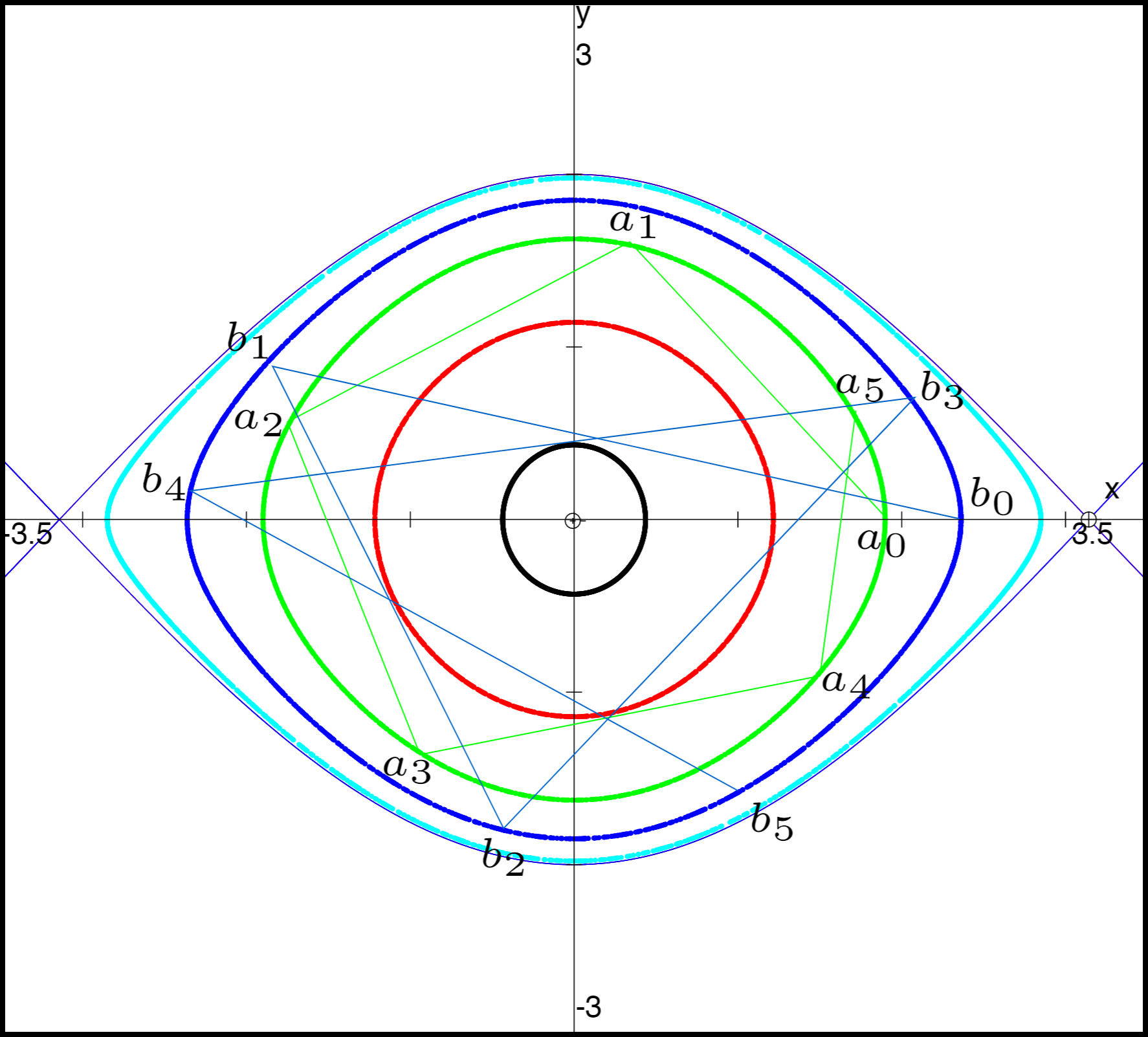}
   \caption{\sl no forcing with orbits}
   \end{subfigure}
   \begin{subfigure}[b]{0.475\linewidth}   
   \includegraphics[width=\linewidth] {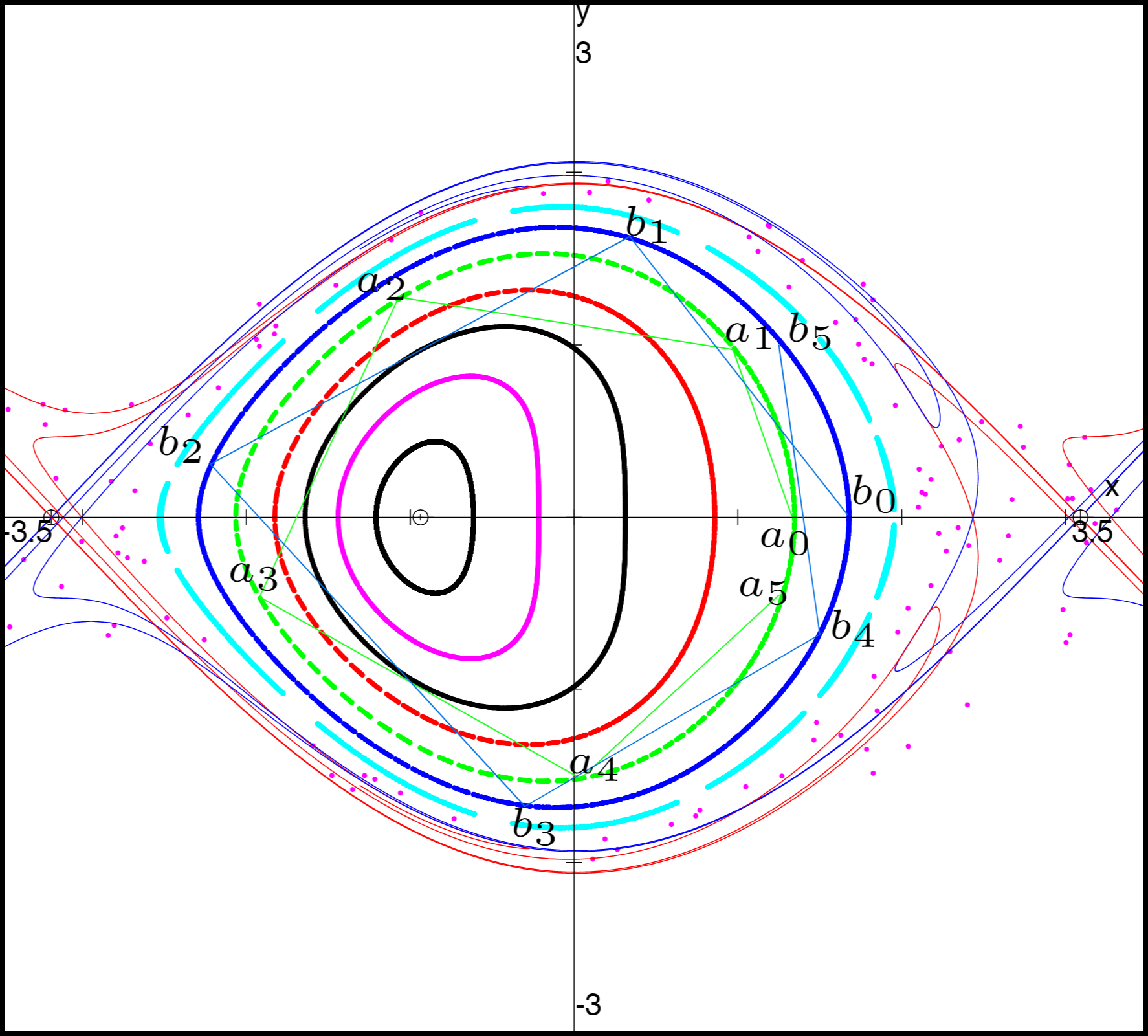}
   \caption{\sl forcing with orbits}
   \end{subfigure}

   \caption{\sl Figure 1(a) shows several orbits for the unforced equation with $a=0$,    
   together with the separatrices of the saddle corresponding to the upward unstable equilibrium of the pendulum. 
   We have drawn the first six   
   iterates of two points, connecting them by lines, 
   and we have labeled the points $a_0,\dots, a_5$ and $b_1,\dots, b_5$.  
   Figure 1(b) is the corresponding picture for $a=.1$.  
   The ``equilibrium'' is now a periodic orbit with period $2\pi$, hence a fixed point of $P$. 
   It is ``surrounded'' by invariant curves, really linear windings on  2-dimensional tori.  }
   \label{fig:orbits}
\end{figure}  

When $a=0$, we can easily find a Lyapunov function for the reduced system. 
In fact, the standard Lyapunov function for a time independent Hamiltonian system is given by
\[
V\begin{pmatrix} x\\y\end{pmatrix}= \frac {y^2}2 -\cos x. 
\]
In this case, the Lyapunov function $V$ is constant with respect to the flow, so the orbits of $P$ lie on level curves of $V$. 
Figure \ref{fig:orbits}(a) shows various orbits, 
with emphasis on the first five iterates of two of the points. 
The orbits of points with irrational rotation number are dense in simple closed curves, 
so a randomly chosen orbit will almost surely be of this sort. 

Figure \ref{fig:orbits}(b) shows the $(x,y)$-plane when $a=.1$.  
The implicit function theorem says that fixed points of $P$ 
corresponding to the stable equilibrium $\mathbf 0$ of the pendulum 
must exist for $a$ sufficiently small,
as well as fixed points corresponding to the unstable equilibrium, 
and that these should still have stable and unstable manifolds. 
The number $a=.1$ is sufficiently small, 
and we see these equilibria, 
as well as the stable and unstable manifolds, 
which have become much more complicated since they now intersect transversally.\bigskip

\begin{figure}[ht!]
\centering
\begin{subfigure}[b]{0.4752\linewidth}   
   \includegraphics[width=\linewidth] {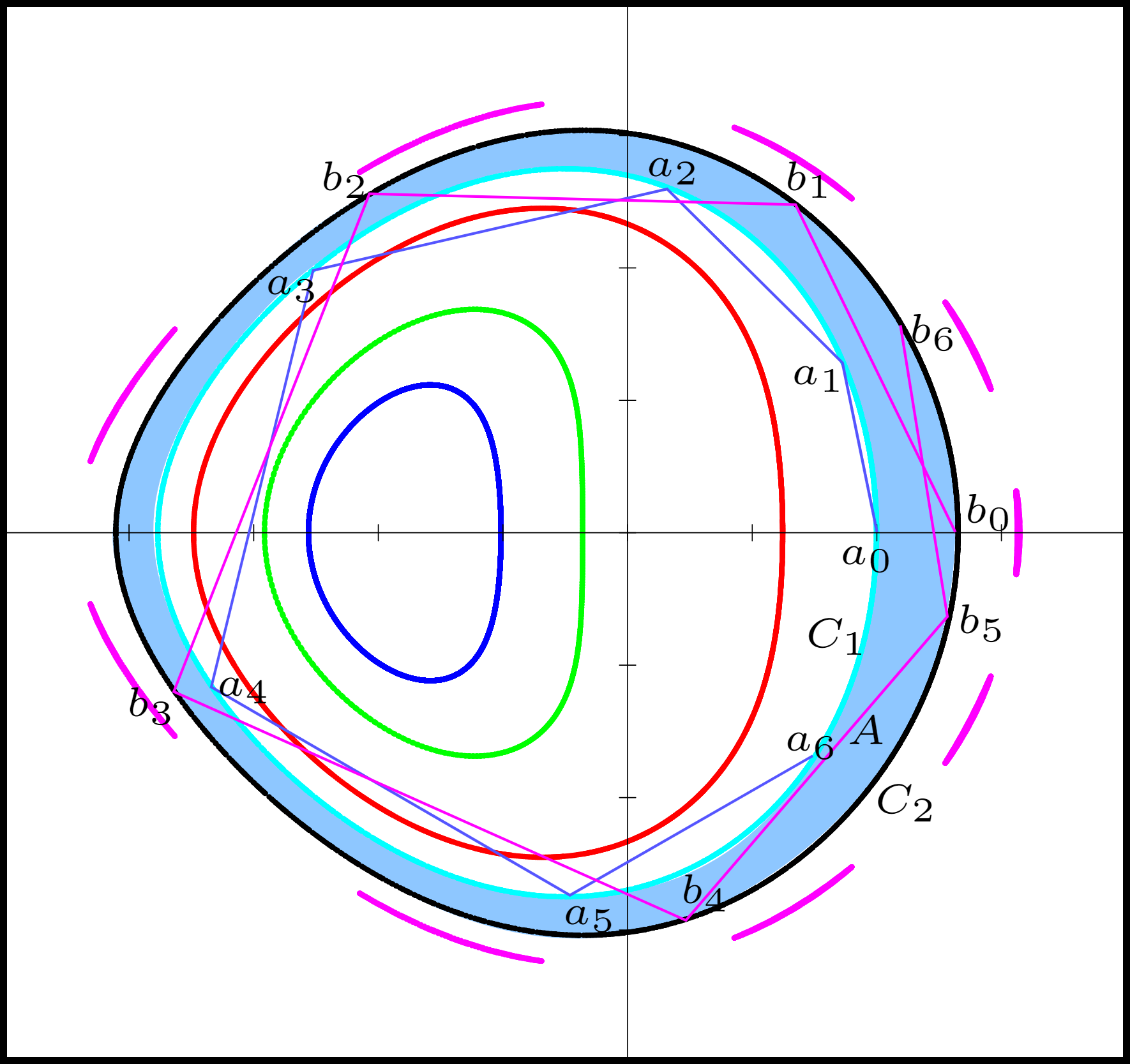}
   \caption{\sl The annulus $A$ bounded by  $C_1$ and $C_2$}
   \end{subfigure}
   \begin{subfigure}[b]{0.475\linewidth}    
   \includegraphics[width=\linewidth] {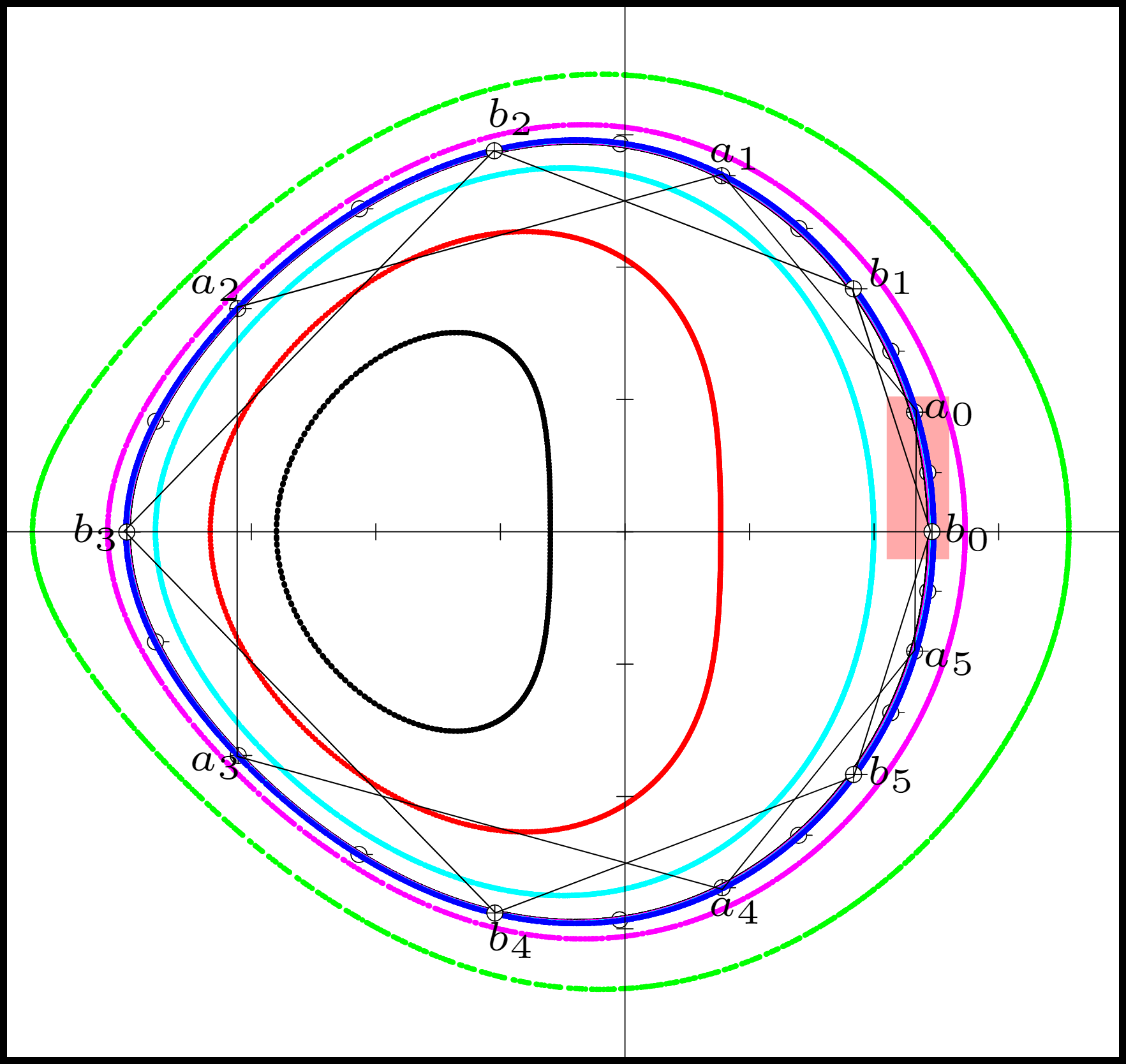}
   \caption{\sl Two six-cycles of saddles and two six-cycles of centers}     
   \end{subfigure}
   \caption{\sl In Figure 2(a), the orbit of $a_0$ on $C_1$ indicates that its rotation number is a bit less than $1/6$. 
   On the other hand, 
   the orbit of $b_0$ on $C_2$ indicates that its rotation number is a bit more than $1/6$. 
   Thus $P^{\circ 6}:A \to A$ is a twist-map. 
   Figure 2(b) shows the cycles of period 6 found by Newton's method. 
   There are two 6-cycles of saddles and two 6-cycles of centers. 
   The saddles have separatrices, of course, and 
   the KAM theorem guarantees that there are invariant cycles of curves around the centers. }
   \label{fig:periodsix}
\end{figure}  

The KAM theorem guarantees that close to the stable equilibrium for the unperturbed system,  
for rotation numbers sufficiently irrational and close enough to the rotation number of the stable equilibrium, 
and for $a$ small, orbits dense in a simple closed curve and with the same rotation number 
will exist for the perturbed system \cite{KAM}. It is very hard to quantify the ``sufficientlies'' in KAM theorem, 
and the numbers one could get from the proof are presumably absurdly pessimistic.
 
In  Figure \ref{fig:periodsix}, 
we will assume that the cyan curve $C_1$ passing through $a_0=(1,0)$ is the closure of the orbit of $a_0$, 
and the black curve $C_2$ passing  through the point $b_0=(1.3,0)$ is the closure of the orbit of $b_0$; 
the computer seems to indicate this is the case. 
We have drawn in the first seven points of the orbits of $a_0$ and $b_0$. 
The orbits of these points indicate that the rotation number of $P:C_1 \to C_1$ is smaller than $1/6$, 
and that the rotation number of $P:C_2 \to C_2$ is greater than $1/6$. 

Thus the region between $C_1$ and $C_2$ inclusive is an annulus, $A$, and $P^{\circ 6}:A \to A$ is a twist map. 
Theorem \ref{thm:PLGT} asserts that $P^{\circ 6}$ has at least two fixed points in $A$; 
but $P^{\circ 6}$ is a 6th iterate, so there must be two periodic cycles of length 6 in $A$. 
We will look for them by Newton's method (for $P^{\circ 6}$, itself a solution of a periodic differential equation!). 
The ``band'' corresponding to period 6 is sufficiently narrow that it is hard to see what is happening there. 
Figure \ref{fig:periodsixblowup} shows a blowup of both this region and of the similar island of period $3$, 
which isn't so flattened and thus we can see more of its structure.

\begin{figure}[ht!]
\centering
\begin{subfigure}[b]{0.45\linewidth}
   \includegraphics[width=\linewidth] {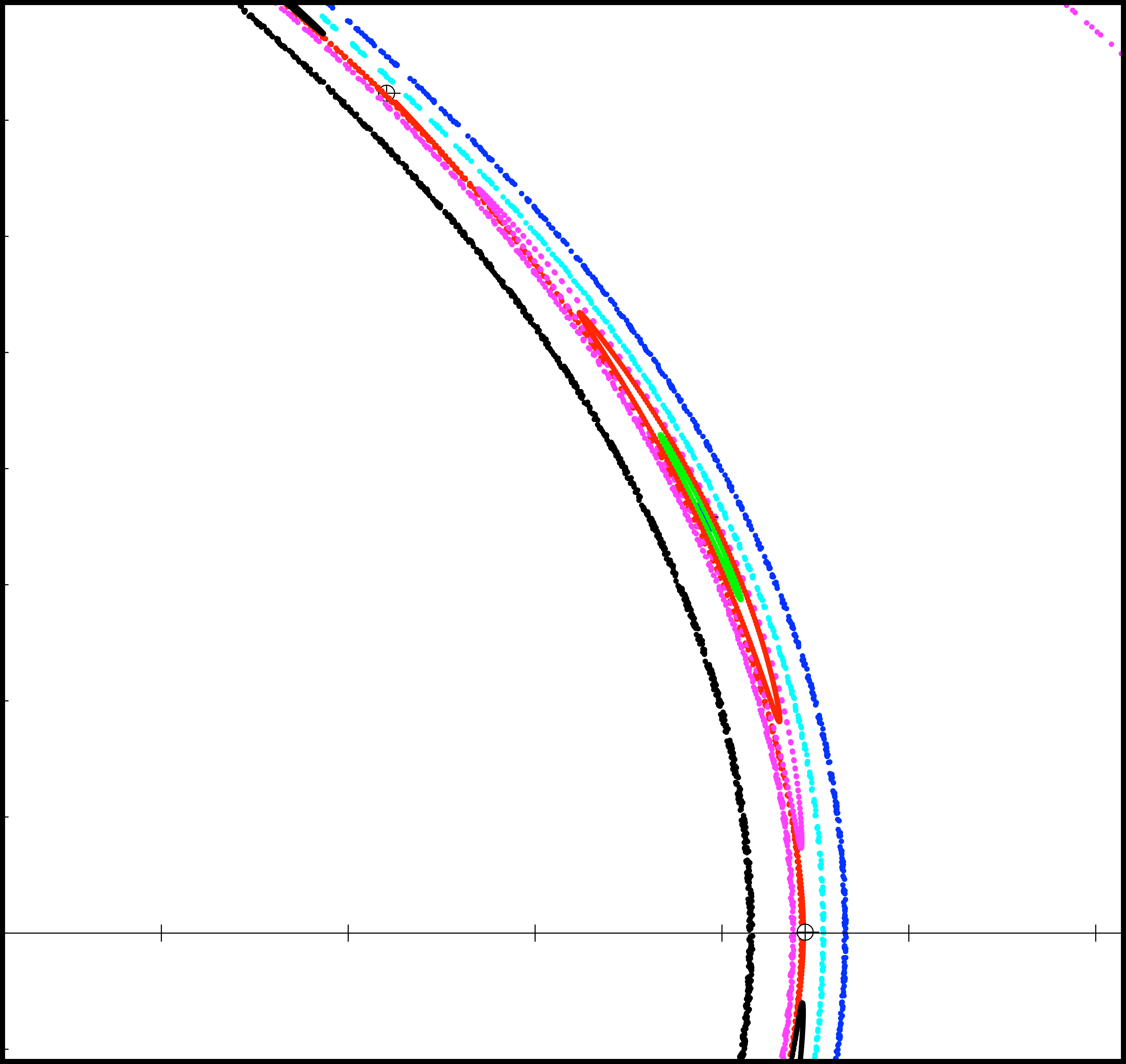}
   \caption{\sl The annulus $A$ bounded by  $C_1$ and $C_2$}
   \end{subfigure}
   \begin{subfigure}[b]{0.45\linewidth}
   \includegraphics[width=\linewidth] {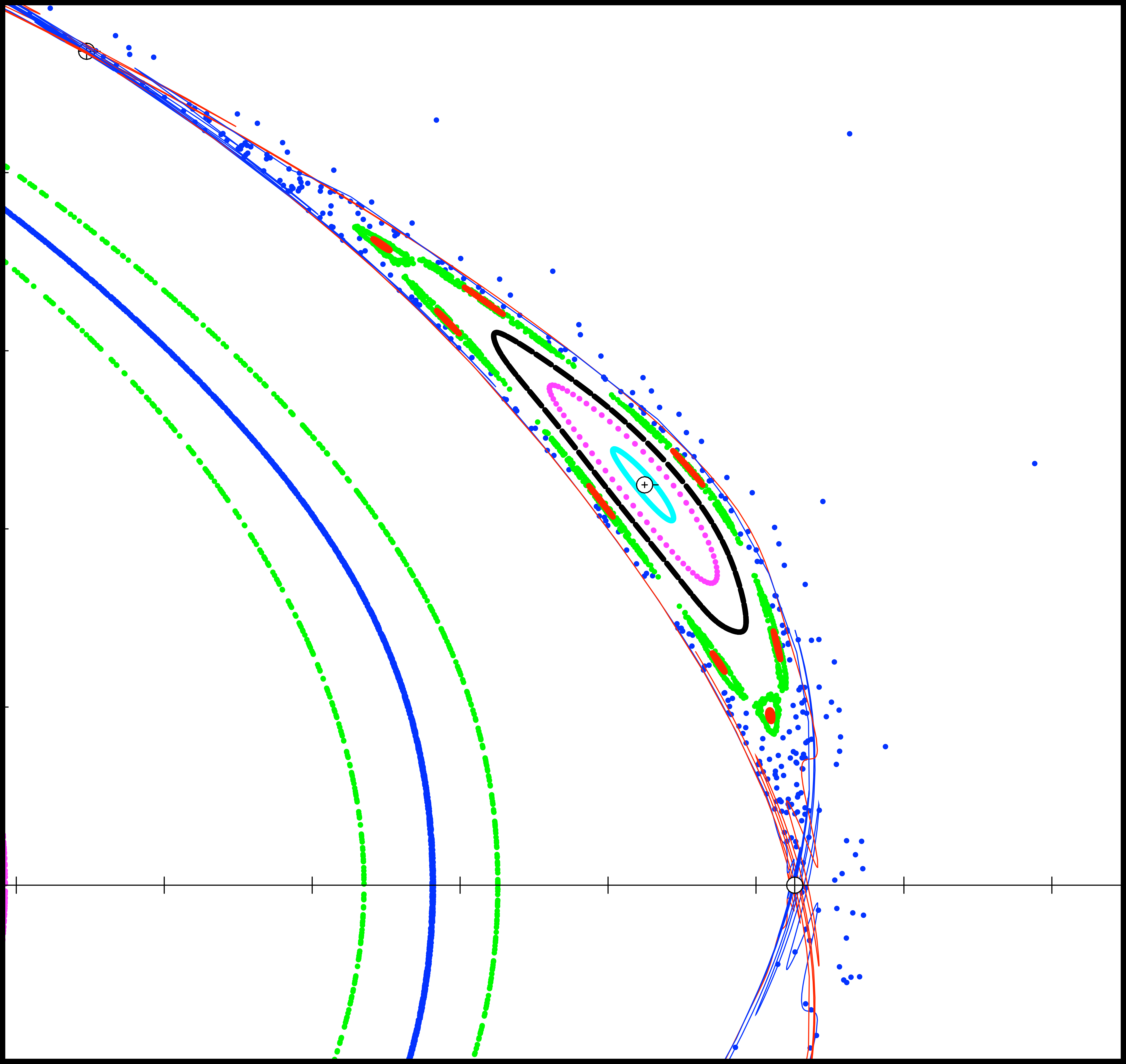}
   \caption{\sl two six-cycles of saddles and two six-cycles of centers}    
   \end{subfigure}
       \caption{\sl Figure 3(a) shows a blowup of the region around one of the centers of period six. 
     In Figure 3(b), there is a blowup of the region around a center of period $3$, a bit further out in the picture. 
     Here we can clearly see invariant curves surrounding the center 
     (really cycles of three invariant curves), and islands. 
     The pattern continues {\it ad infinitum.}}
   \label{fig:periodsixblowup}
\end{figure} 

In Figure \ref{fig:periodsixblowup} we notice that the center of period 3 is 
the organizing center of a region rather like the large-scale region represented on the right in Figure \ref{fig:orbits}. 
There are two saddles whose stable and unstable manifolds form a {\bf homoclinic tangle}; 
the stable and unstable manifolds intersect transversally, 
and it follows that they accumulate on each other.

 \begin{figure}[ht!]
\centering
\begin{subfigure}[b]{0.451\linewidth}
   \includegraphics[width=\linewidth] {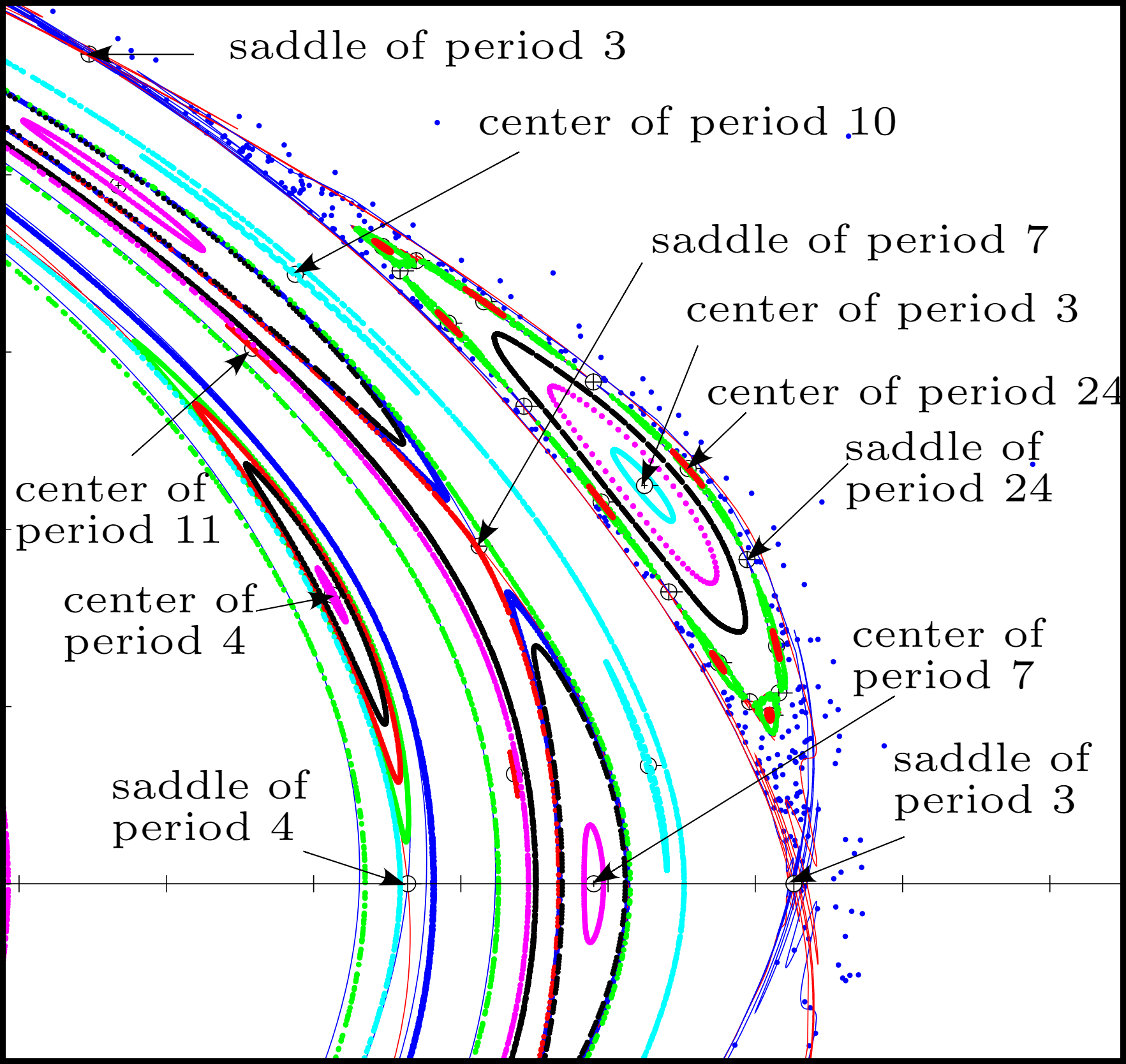}
   \caption{\sl The annulus $A$ bounded by  $C_1$ and $C_2$}
   \end{subfigure}
   \quad
   \begin{subfigure}[b]{0.45\linewidth}
   \includegraphics[width=\linewidth] {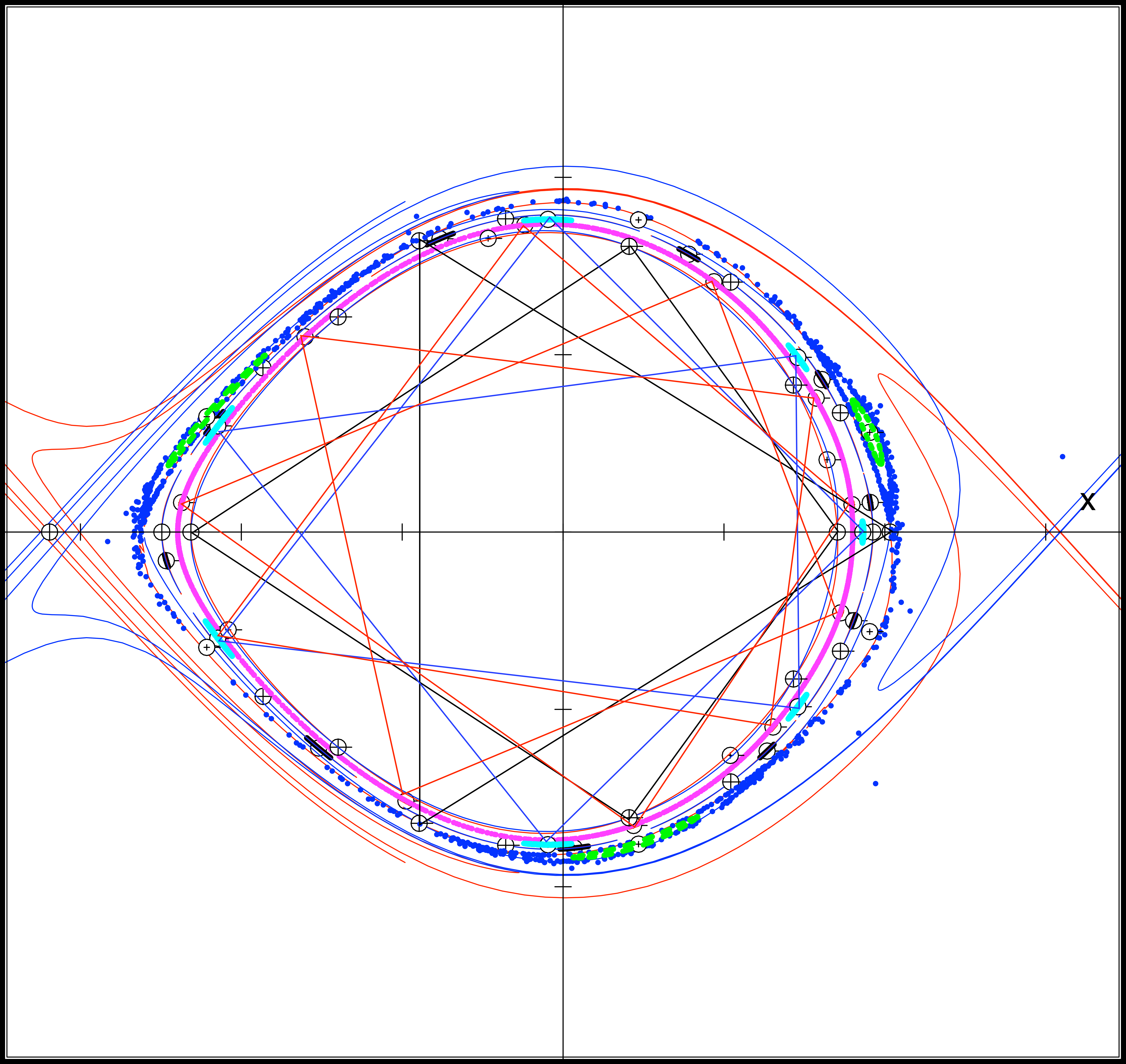}
   \caption{\sl Two six-cycles of saddles and two six-cycles of centers}
   \end{subfigure}

\caption{\sl The left figure shows the orbits of period 3, 
with two  satellite  orbits of period 24, two orbits of period 4 (one of centers and one of saddles), 
two orbits of period 7 (also one of centers and one of saddles), and one orbit of period 10 and one of period 11. 
On the right, we see some explanation of the periods from above. The ``Farey sum'' of $1/3$ and $1/4$ is $2/7$, and thus the orbit of period 7 (blue lines in the picture) forms a $2/7$ star. The Farey sum of $1/3$ and $2/7$ is $3/10$ and, indeed, the orbit of of period $10$ forms a $3/10$ star. Similarly for $1/4$ and $2/7$ giving a $3/11$ star.}
   \label{fig:Fareytreeinpendulum}
\end{figure} 

A bit of further reflection will show that the pattern they form is much more complicated than that: 
there are eight saddles forming two 4-cycles. 
Each has a stable and unstable manifold, and all of these accumulate on each other. 
There is a family of invariant curves (really 3-cycles of invariant curves)   
surrounding the period 3 center, 
but they also exist for sufficiently irrational rotation numbers, 
leaving regions corresponding to the rational numbers where we can apply Theorem \ref{thm:PLGT}, 
giving cycles of periods that are multiples of 3.  
In the picture, we see such a periodic cycle of period 24, surrounded by its own invariant curves. 
This pattern of ``order'', {\it i.e.,} motion on invariant curves, 
technically called {\bf quasi-periodic} motion, 
occurs with positive probability, but with dense subsets of ``chaos'' regions containing homoclinic tangles, 
but also further regions of order, and so forth, {\it ad infinitum...}

\end{example}
\end{section}

\begin{section}{Proof of Theorem \ref{thm:PLGT}}

We shall begin by presenting the geometric intuition for the proof. 
Then we will proceed to prove the existence of the first fixed point and, finally, 
extend that argument to show the existence of the second.

\subsection{Intuition for the proof}
\begin{figure}[ht!]
\centering
\includegraphics[width=\textwidth]{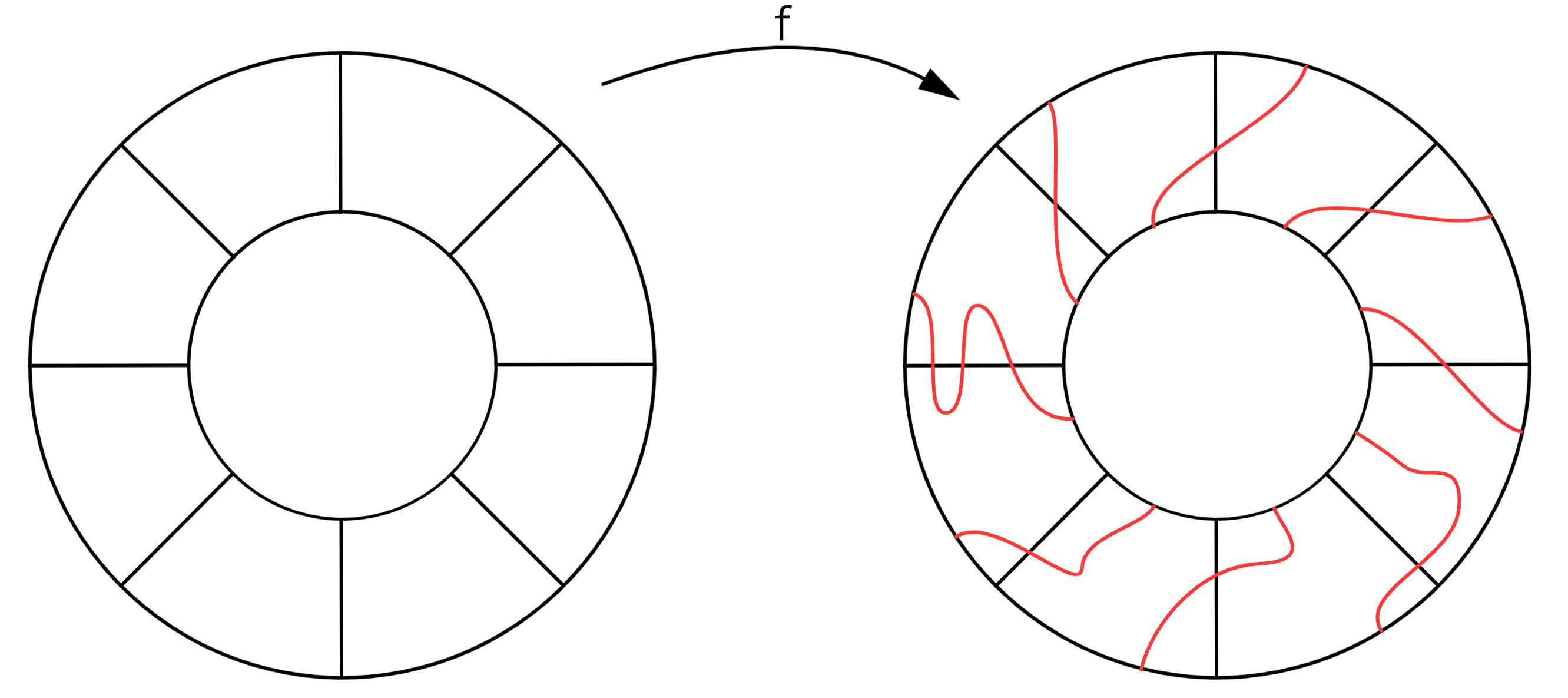}
\caption{\sl The fibres of $\alpha$ (black) and their images (red).}
\label{fig:Intuition1}
\end{figure}

Let us begin by considering the fibres of $\alpha(x,y)=x$ in $A$, which correspond to radial lines. 
These fibres necessarily intersect their images 
as a consequence of the twist condition and the intermediate value theorem, as can be seen in Figure \ref{fig:Intuition1}.

Next, isolate the intersections of these fibres with their images 
as in  Figure 6(a). 
One can imagine that if we were to run this process for progressively denser sets of fibres of $\alpha$, 
we might hope to obtain a set of smooth curves. 
These curves would be $x-$invariant in the sense that they satisfy the equation $f_1(x,y)=x$ 
({\it i.e.,} the $x-$coordinate of the points in these curves is unchanged by $f$).
\bigskip

\begin{figure}[ht!]
\centering
\begin{subfigure}[b]{0.44\linewidth}
\includegraphics[width=\linewidth]{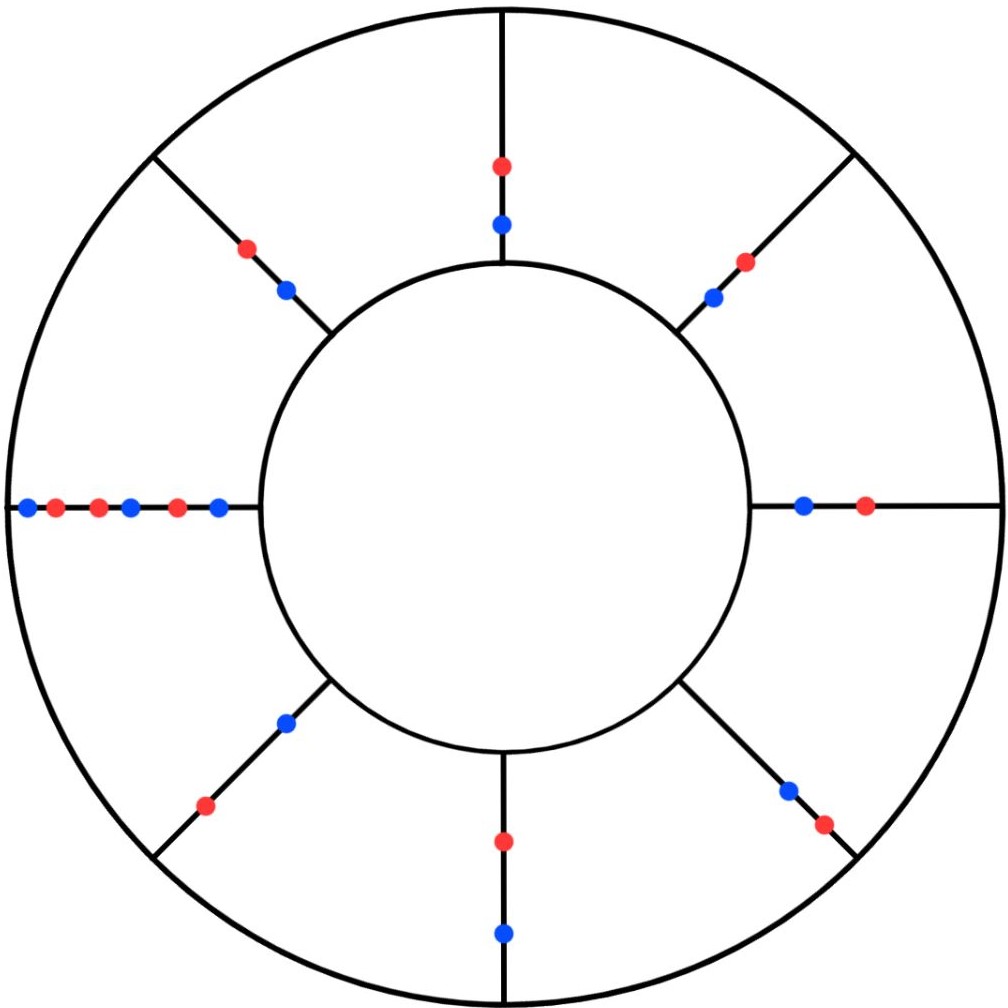}
\caption{\sl Fibre intersection points (red) and their preimages (blue).\\}\label{fig:Intuition2a} 
\end{subfigure}
\hfill
\begin{subfigure}[b]{0.44\linewidth}
\includegraphics[width=\linewidth]{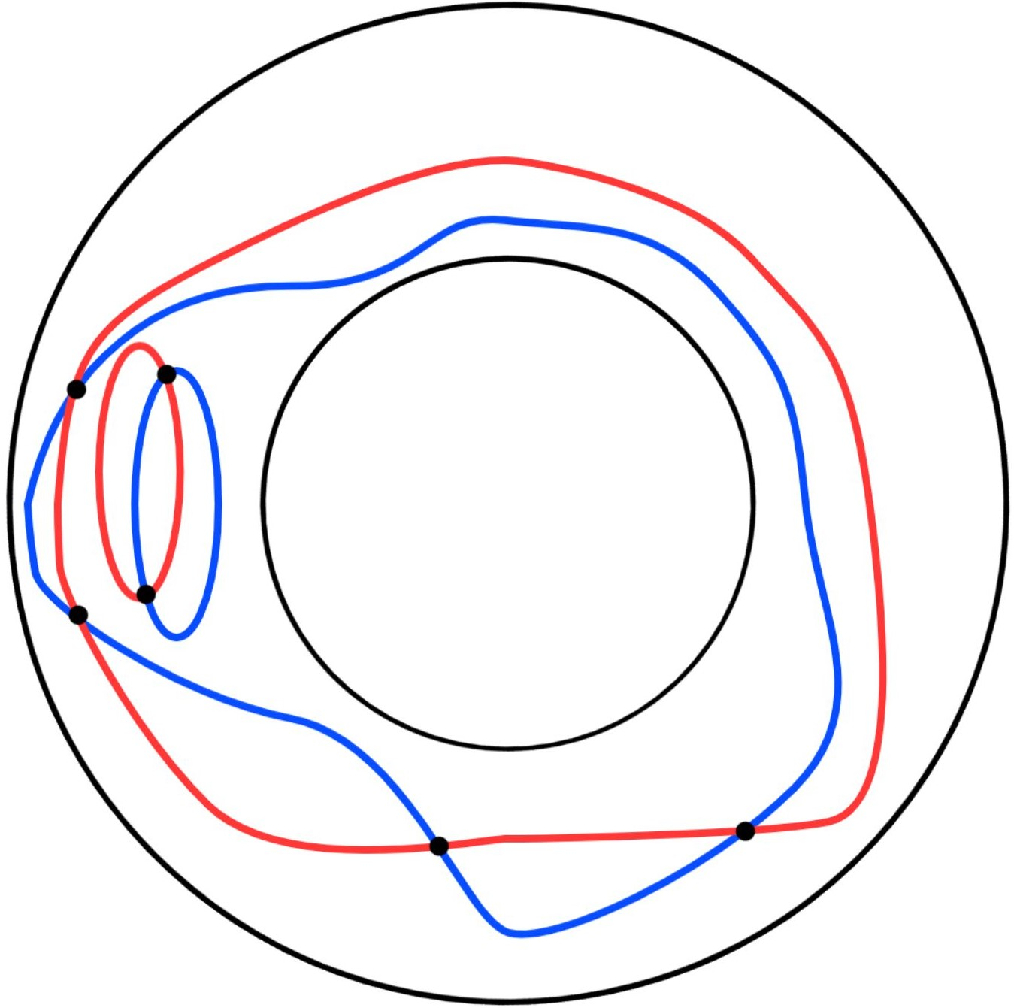} 
\caption{\sl Invariant curves (blue), their images (red), together with their intersections (black).}\label{fig:Intuition2b}
\end{subfigure}
\caption{\sl Construction of the $x-$invariant curves.}
\label{fig:Intuition2}
\end{figure}

The intersection points in Figure 6(b) 
must exist 
because otherwise the region enclosed by some $x-$invariant curve would necessarily 
be mapped to a proper subset or superset of itself, 
which contradicts the assumption that $f$ admits a positive integral invariant. 
Then one might conclude that these intersection points are fixed points. 
However, there are two claims here needing further justification:

\begin{enumerate}
    \item The invariant curves exist.
    \item The intersection points of invariant curves and their images constitute fixed points of $f$.
\end{enumerate}
It turns out (1) is indeed provable, however (2) is not necessarily true. 
Thus, the proof will proceed as a variation of the intuitive approach outlined here.

\subsection{Existence of invariant curves}
Proving the existence of the hinted at $x-$invariant curves is nearly a direct application of the preimage theorem to the map
$$
F(x,y)=\alpha(f(x,y))-\alpha(x,y)=f_1(x,y)-x.
$$
In which case $F^{-1}(0)$ would be composed of the desired set of invariant curves. 

\begin{theorem}[The Preimage Theorem \cite{G&P}, page 21] \label{thm:Preimage}   
Let $y\in Y$ be a regular value of a $C^1$ map $F:X\rightarrow Y$;  
 so, in particular, $DF(x)$ is surjective for all $x$ such that $F(x)=y$. 
 Then $F^{-1}(y)$ is a submanifold of $X$  of dimension  $\text{dim}(F^{-1}(y))=\text{dim}(X)-\text{dim}(Y)$. 
\eop
\end{theorem}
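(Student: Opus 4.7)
The plan is to reduce the theorem to a local statement at each point of $F^{-1}(y)$, then use surjectivity of $DF$ and the inverse function theorem to write $F$ locally as a linear projection, for which the preimage of a point is evidently a submanifold.

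Fix $x_0 \in F^{-1}(y)$. Working in charts centred at $x_0$ and $y$, we may identify a neighbourhood of $x_0$ with an open set $U \subseteq \R^n$ containing $0$ and a neighbourhood of $y$ with an open set in $\R^m$ containing $0$, where $n = \dim X$ and $m = \dim Y$; in these charts $F(0) = 0$. Since $DF(0) : \R^n \to \R^m$ is surjective by hypothesis, its kernel has dimension $n-m$, and after a linear change of coordinates on $\R^n$ we may assume $DF(0)$ is the projection $(x_1, \ldots, x_n) \mapsto (x_1, \ldots, x_m)$ onto the first $m$ coordinates.

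The key construction is to define $G : U \to \R^n$ by
\[
G(x_1, \ldots, x_n) = \bigl( F_1(x), \ldots, F_m(x), x_{m+1}, \ldots, x_n \bigr).
\]
Then $DG(0)$ is the identity matrix, so by the inverse function theorem $G$ is a $C^1$ diffeomorphism from some smaller neighbourhood $V$ of $0$ onto its image. By construction $F = \pi \circ G$ on $V$, where $\pi : \R^n \to \R^m$ is the projection onto the first $m$ coordinates. Consequently
\[
F^{-1}(0) \cap V = G^{-1}\bigl( \{0\} \times \R^{n-m} \bigr) \cap V,
\]
and the right-hand side is the image under the diffeomorphism $G^{-1}$ of a linear slice of dimension $n - m$; hence it is a $C^1$ submanifold of $V$ of dimension $n - m$. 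Since $x_0$ was arbitrary in $F^{-1}(y)$, this exhibits $F^{-1}(y)$ as a submanifold of $X$ of dimension $n - m$, as required.

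The only genuinely nontrivial ingredient is the inverse function theorem, which I would invoke as a black box; everything else is a matter of arranging coordinates so that $F$ becomes the first factor of a diffeomorphism. The main subtlety to watch is consistency of dimensions in the linear reduction of $DF(0)$ to a projection, but this is immediate once one picks any complement to $\ker DF(0)$ in $\R^n$ and uses it as the first $m$ coordinate directions.
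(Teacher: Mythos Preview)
Your proof is correct and is essentially the standard argument via the inverse function theorem found in Guillemin and Pollack. The paper itself does not prove this statement: it is quoted as a black-box result with a citation to \cite{G&P} and marked with the end-of-proof symbol, so there is nothing further to compare.
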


However, it cannot be guaranteed that $0$ is a regular value of our $F$,  
a necessary condition for the 
application of the preimage theorem. 
Thus, we need a slightly stronger result, the parametric transversality theorem.

\begin{theorem}[The Parametric Transversality Theorem \cite{G&P}, page 68]\label{thm:PTT}
Consider a $C^1$ map
$F:S\times X \rightarrow Y$ such that only $X$ has boundary. 
Take $Z$ to be a submanifold of $Y$, also without boundary.
If $F$ is transversal to $Z$, then 
almost every $F_s:X\rightarrow Y$ in the one parameter family $\{F_s\}_{s\in S}$ is transversal to $Z$.\eop
\end{theorem}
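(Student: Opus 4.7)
My plan is to reduce the Parametric Transversality Theorem to Sard's theorem by passing through the Preimage Theorem. First, using the hypothesis that $F$ is transversal to $Z$, I would invoke a transversal version of the Preimage Theorem (Theorem~\ref{thm:Preimage}) to conclude that $W := F^{-1}(Z)$ is a $C^1$ submanifold of $S\times X$, with boundary $\partial W = W \cap (S\times \partial X)$ and with codimension in $S\times X$ equal to that of $Z$ in $Y$.

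Next I would study the restriction to $W$ of the natural projection $\pi:S\times X\to S$. Sard's theorem, applied both to $\pi|_W$ and to $\pi|_{\partial W}$, shows that the set of their common regular values has full measure in $S$. So the claim I still need to establish is: if $s_0\in S$ is a regular value of both $\pi|_W$ and $\pi|_{\partial W}$, then $F_{s_0}:X\to Y$ is transversal to $Z$.

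The heart of the argument is a short diagram chase in tangent spaces. Fix $x_0\in X$ with $F_{s_0}(x_0) = z_0 \in Z$, and let $v\in T_{z_0}Y$. Using transversality of $F$ at $(s_0,x_0)$ I would write
\[
v = DF(s_0,x_0)(a,b) + w, \qquad (a,b)\in T_{s_0}S\oplus T_{x_0}X,\ \ w\in T_{z_0}Z.
\]
Regularity of $s_0$ for $\pi|_W$ then supplies $b'\in T_{x_0}X$ such that $(a,b')\in T_{(s_0,x_0)}W$, and since $F$ carries $W$ into $Z$ we have $DF(s_0,x_0)(a,b')\in T_{z_0}Z$. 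Subtracting yields
\[
v = DF_{s_0}(x_0)(b - b') + \bigl(w + DF(s_0,x_0)(a,b')\bigr),
\]
in which the parenthesized term lies in $T_{z_0}Z$. This is exactly the transversality of $F_{s_0}$ to $Z$ at $x_0$. The main obstacle is this final linear-algebraic correction: one must see that the role of the regular value $s_0$ is precisely to cancel the unwanted $T_{s_0}S$-component in the transversality decomposition of $F$, converting it into a usable $T_{x_0}X$-vector. Boundary points $x_0\in\partial X$ require no new ideas beyond the parallel appeal to Sard for $\pi|_{\partial W}$.
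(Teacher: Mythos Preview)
The paper does not give its own proof of this statement: the \verb|\eop| marker after the theorem indicates that it is quoted without proof, with the citation to Guillemin and Pollack, page 68, serving in lieu of an argument. So there is no in-paper proof to compare against.

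Your proposal is correct and is in fact precisely the standard proof found in Guillemin and Pollack: form $W=F^{-1}(Z)$ via the transversal preimage theorem, apply Sard to the projection $\pi|_W$ (and $\pi|_{\partial W}$), and then do the linear-algebra correction you describe to show that regular values $s_0$ of $\pi|_W$ yield $F_{s_0}$ transversal to $Z$. The diagram chase you wrote out is exactly right: regularity of $s_0$ lets you replace the unwanted $(a,b)$ by $(a,b')\in T_{(s_0,x_0)}W$, and the difference $(0,b-b')$ lives purely in $T_{x_0}X$.

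One small caveat worth flagging, since you are writing out the argument rather than citing it: the theorem as stated in the paper assumes only $C^1$, but Sard's theorem for $\pi|_W:W\to S$ requires $C^k$ with $k\ge\max(1,\dim W-\dim S+1)$. In the paper's actual application ($\dim W=2$, $\dim S=1$) this would demand $C^2$. This is a wrinkle in the stated hypothesis rather than in your reasoning, but if you intend your write-up to stand on its own you should either strengthen the regularity assumption or note the restriction on dimensions.
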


To apply Theorem \ref{thm:PTT} we parameterize $\alpha$ by an additional parameter, $\phi\in\mathbb{R}_{+}$, 
by $\alpha_\phi(x,y)=\phi \cdot x-y$. 
This naturally implies a redefinition of $F:\mathbb{R}_+\times A\rightarrow\mathbb{R}$, 
given now by 
$$
F(\phi,x,y)=\alpha_\phi(f(x,y))-\alpha_\phi(x,y). 
$$
When we wish $\phi$ to be held constant, we denote $F(\phi,x,y)$ by $F_\phi(x,y)$. 

Now we need to show $0$ is a regular value of $F$. In particular, we need to show that
$$
DF=
\left[f_1-x,
\phi(\dfrac{\partial f_1}{\partial x}-1)-\dfrac{\partial f_2}{\partial x},
1-\dfrac{\partial f_2}{\partial y}+\phi\dfrac{\partial f_1}{\partial y}
\right]
$$
is onto for all $(x,y)\in F^{-1}(0)$. If $DF$ weren't onto, 
then each of its entries must be equal to $0$, which would imply $x=f_1$. 
This along with $F_\phi(x,y)=0$ would tell us $f(x,y)=(x,y)$. 
That is, $DF$ can only fail to be onto if $(x,y)$ is a fixed point of $f$. 

Thus, there are exactly two possibilities:

\begin{enumerate}
\item There is a set of $\phi$ of full measure such that $F_\phi$ is transverse to $0$.
\item There is a fixed point of $f$ in $A$.
\end{enumerate}

After covering several additional preliminary results, we begin the central thrust of the proof of Theorem \ref{thm:PLGT} in Section 5. There we show that (1) contradicts the existence of a positive integral invariant, thus establishing that $f$ has at least one fixed point in $A$. Then in Section 6 we show that the assumption in (2) of a unique fixed point also leads to a contradiction. In fact, by a slight modification of the previous argument the assumption of a unique fixed point still contradicts the existence of a positive integral invariant.  

If (1) holds, then, by the definition of transversality, there exists $\phi\in\mathbb{R}_+$ such that $0$ is a regular value of $F_\phi$. 
So, by the preimage theorem, $F^{-1}_\phi(0)$ is a compact $1-$dimensional submanifold of $A$ and thus 
contains a finite number of connected components 
(all closed curves, by the classification of compact $1-$manifolds). 
Give each component the preimage orientation. 
This choice of orientation endows each component which 
wraps around the annulus with winding number $\pm1$ and 
all other components with winding number $0$. 
Figure \ref{fig:ComplexEx} provides an example of the sort of geometry we might expect here.
When there is no risk of ambiguity, $F^{-1}_\phi(0)$ is referred to as $\mathcal{I}_\phi$ or simply as $\mathcal{I}$.

\begin{figure}[H]
\centering
\includegraphics[width=\textwidth]{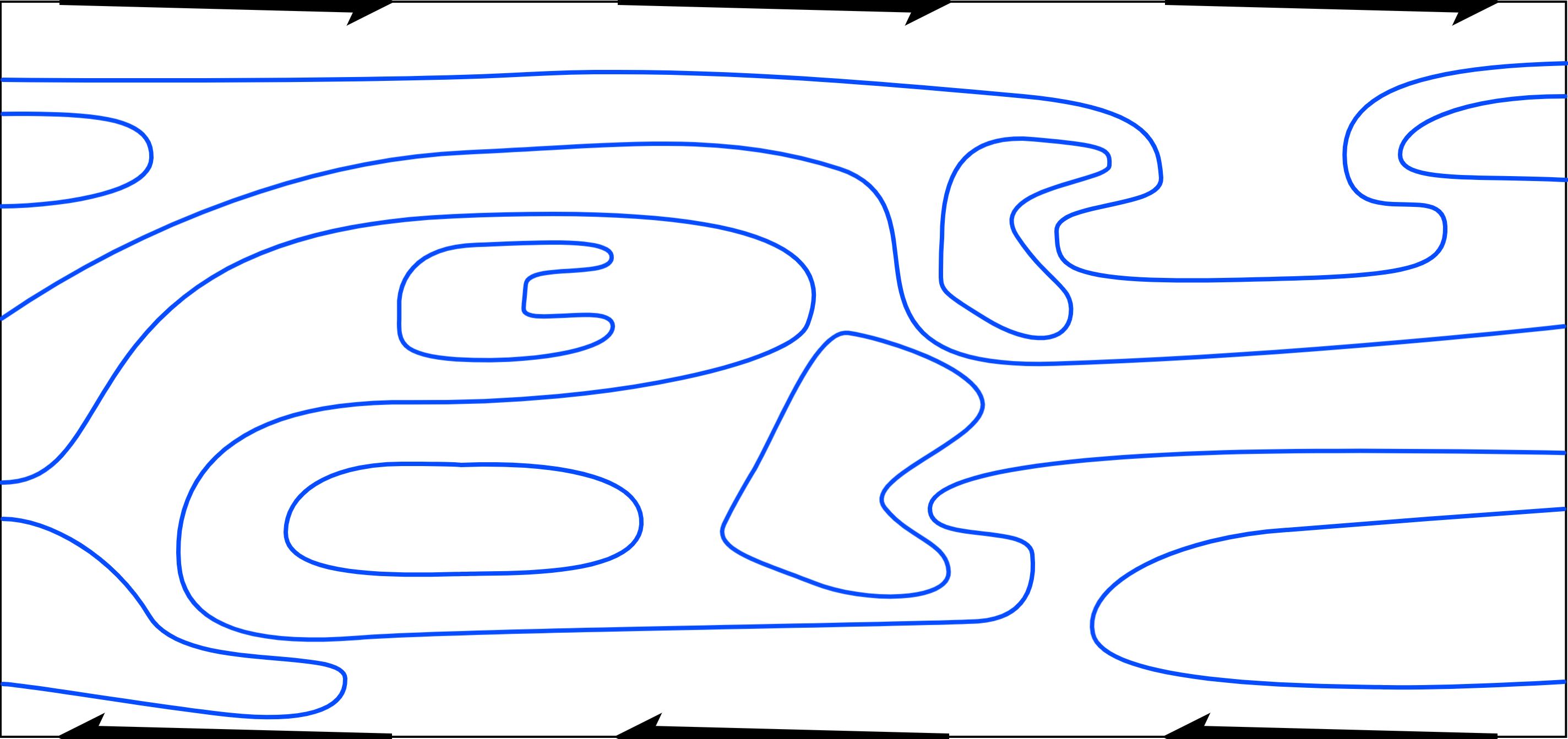}
\caption{\sl A possible instance of $\mathcal{I}$, with nine  
components, three of which have non-zero winding number. 
The arrows denote the twisting of the inner and outer boundaries in opposite directions. 
Throughout the paper, it will be standard to represent the annulus by the $1$-periodic strip, as shown here.}
\label{fig:ComplexEx}
\end{figure}

\subsection{A modification of $f$}
The parameterization of $\alpha$ with respect to $\phi$, 
necessary to apply Theorem \ref{thm:PTT} and the preimage theorem,   
complicates things slightly 
by endowing $\alpha_\phi(x,y)=\phi \cdot x-y$ with slanted fibres 
in comparison to the straight fibres of the original $\alpha(x,y)=x$. 
So, to simplify the analysis, we conjugate the system by the linear transformation
$$
T(x,y)=\left(x-\dfrac{y}{\phi}, y\right)
$$
with inverse 
$$
T^{-1}(x,y)=\left( x+\dfrac{y}{\phi}, y\right).
$$  
In these coordinates, the fibres of $\alpha$ are straight, 
so $f$ translates $\mathcal{I}$ vertically (viewing $A$ as the $1$-periodic strip). 
Moreover, conjugation by $T$ is simply a change of coordinates, so the hypotheses of the theorem remain satisfied. 

\subsection{Components with winding number $\pm1$ within $\mathbf{\mathcal{I}}$}

Let $\gamma:[0,1]\rightarrow\mathbb{R}^2$ be a parameterization of a simple closed  curve. 
Then the {\bf winding number} of $\gamma$ about $z\in\mathbb{R}^2$ is given by
$$W(\gamma,z)=\deg\left(\dfrac{\gamma(x)-z}{|\gamma(x)-z|}\right)$$
where $\deg(g)$ is the topological degree of $g$.

\begin{lemma}[An integral formula for the  
winding number \cite{G&P}]\label{lemma:StokesWinding}
The winding number $W(\gamma)=W(\gamma,0)$ of a curve $\gamma$ about the origin can be computed as
$$
W(\gamma)=\oint_\gamma d\theta,
$$
where $\theta(\vec{x})$ is the ``argument'' or ``angle'' of $\vec{x}$.  
In Cartesian coordinates 
for example we have $\theta(x,y)=\tan^{-1}\left(\dfrac{y}{x}\right)$.\eop
\end{lemma}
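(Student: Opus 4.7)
The plan is to interpret both sides of the claimed equality as two expressions for the degree of the normalized map $g\colon S^1 \to S^1$ sending $t$ to $\gamma(t)/|\gamma(t)|$. By the definition given immediately above the lemma, the winding number $W(\gamma)$ is exactly $\deg(g)$, so the lemma reduces to showing that $\oint_\gamma d\theta$ also computes this degree (with the normalization convention implicit in $d\theta$ on the lemma's right-hand side).

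First I would observe that although $\theta$ itself is multi-valued on the punctured plane, the expression
$$d\theta = \frac{-y\,dx + x\,dy}{x^2 + y^2}$$
is a genuine, globally defined closed 1-form on $\R^2\setminus\{0\}$. A direct calculation — or, equivalently, the remark that $\theta\circ g = \theta$ on any simply connected open subset where a branch has been chosen — shows that $d\theta = g^*\omega$, where $\omega$ is the standard angular form on $S^1$. Pulling back once more along the parameterization $\gamma$, we obtain
$$\oint_\gamma d\theta = \int_{[0,1]} \gamma^* d\theta = \int_{[0,1]} (g\circ\gamma)^*\omega.$$

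Next I would invoke the standard differential-topological formula from Guillemin and Pollack: for any smooth map $h\colon S^1 \to S^1$, the degree is $\deg(h) = \int_{S^1} h^*\omega$ once $\omega$ is normalized appropriately. Applied to $h = g\circ\gamma$, this gives
$$\oint_\gamma d\theta = \deg(g\circ\gamma) = W(\gamma),$$
which is exactly the statement of the lemma.

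The main point requiring care is not a deep obstacle but a bookkeeping issue: the multi-valuedness of $\theta$ means the symbol $\oint_\gamma d\theta$ needs interpretation. This is handled cleanly by lifting $\gamma$ to the universal cover $\R \to S^1$ of the target circle, so that a continuous branch of the composition $\theta\circ\gamma$ exists along the entire curve; then $\oint_\gamma d\theta$ is simply the total net change of this lifted angle. The intuitive content is that this total change counts the signed number of full rotations that $\gamma$ makes around the origin, which is precisely the winding number.
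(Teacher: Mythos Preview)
Your argument is correct and is essentially the standard proof one finds in the cited reference. Note, however, that the paper does not actually prove this lemma: it is stated with a citation to Guillemin and Pollack and immediately closed with the end-of-proof symbol. So there is no ``paper's own proof'' to compare against beyond the deferral to \cite{G&P}. Your write-up supplies precisely the argument that reference contains: identify $d\theta$ as the pullback of the (suitably normalized) angular form on $S^1$ under the Gauss map $g(t)=\gamma(t)/|\gamma(t)|$, then invoke the degree formula $\deg(h)=\int_{S^1} h^*\omega$. Your remark about the normalization convention is apt, since the lemma as stated omits the customary factor of $1/2\pi$; this is a looseness in the statement rather than in your proof.
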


\begin{lemma} 
\label{lemma:NzWinding}
The winding number of $\mathcal{I}$ is equal to $1$. 
In other words, we have $W(\mathcal{I})=1$.
\end{lemma}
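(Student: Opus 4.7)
The plan is to reinterpret $W(\mathcal{I})$ as the algebraic intersection number of $\mathcal{I}$ with a vertical fibre of $A$, and then to reduce that count to a one-dimensional sign-change argument driven by the twist condition. After the conjugation of Section~4.3 the defining condition becomes $\mathcal{I}=\{(x,y)\in A : f_1(x,y)=x\}$, and every component of $\mathcal{I}$ is a simple closed curve, so in the annulus $A$ each is either nullhomotopic or essential. If $\sigma$ is any properly embedded vertical arc from the lower boundary to the upper boundary meeting $\mathcal{I}$ transversally, then under the identification $H_1(A;\Z)\cong\Z$ the quantity $W(\mathcal{I})$ coincides with the algebraic intersection number of $\mathcal{I}$ with $\sigma$ oriented upwards.

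To carry out the count I would take $\sigma=\{x_0\}\times[0,1]$ for a generic $x_0\in\R/\Z$, so that transversality holds (this is a regular-value condition for the restriction of the projection $(x,y)\mapsto x$ to $\mathcal{I}$, hence available by Sard's theorem). Since $f$ preserves each boundary circle, on $y=0$ the function $f_1(x,0)-x$ is controlled by the twist condition and is of constant negative sign, and on $y=1$ the function $f_1(x,1)-x$ is of constant positive sign. In particular $\mathcal{I}$ is disjoint from $\partial A$, and $\mathcal{I}\cap\sigma$ consists precisely of the zeros of
\[
g(y)=f_1(x_0,y)-x_0
\]
in the open interval $(0,1)$, each simple by transversality.

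At each such zero $y_i$, the sign carried by the transverse intersection, with the preimage orientation on $\mathcal{I}$ paired against the upward orientation on $\sigma$, equals $\mathrm{sign}(g'(y_i))$. Summing over the zeros yields
\[
W(\mathcal{I})=\sum_i\mathrm{sign}\bigl(g'(y_i)\bigr)=\tfrac{1}{2}\bigl(\mathrm{sign}(g(1))-\mathrm{sign}(g(0))\bigr)=1,
\]
where the middle equality is a telescoping of sign changes between consecutive simple zeros, and the last uses $g(0)<0<g(1)$ from the twist condition.

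The main obstacle I anticipate is keeping the orientation conventions straight: checking that the preimage orientation on $F_\phi^{-1}(0)$, when paired with the upward orientation on $\sigma$ and the standard orientation on $A$, really produces $+\mathrm{sign}(g'(y_i))$ rather than its negative, and that the resulting signed intersection number genuinely agrees with the $\pm 1$ (essential) / $0$ (inessential) description of component winding numbers recorded just before the lemma. These are routine local computations with $\nabla F_\phi$ at each intersection point, but without careful bookkeeping one easily arrives at $-1$ in place of $+1$.
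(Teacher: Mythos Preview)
Your proposal is correct and follows essentially the same approach as the paper: take a transverse arc from one boundary component to the other, use the twist condition to pin down the signs of $F$ at its endpoints, and then count the (simple, alternating-sign) zeros along the arc to obtain the winding/intersection number $1$. The only cosmetic difference is that the paper routes the computation through $W(\partial A\cup\mathcal{I})=1$ and then subtracts $W(\partial A)=0$ via Stokes, whereas you compute the signed intersection with $\mathcal{I}$ directly via the telescoping sum of $\mathrm{sign}(g'(y_i))$; these are the same argument in slightly different packaging.
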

\begin{proof}
Let $\gamma$ be a simple curve in $A$ transversal to $\mathcal{I}$ which runs from one boundary to the other as 
$\gamma(0)\in \mathbb{R}/\mathbb{Z}\times\{1\}$ and $\gamma(1)\in\mathbb{R}/\mathbb{Z}\times\{0\}$. 
Then by the twist property, we have $F(\gamma(0))>0>F(\gamma(1))$. 
Thus, $F\circ\gamma$ changes sign an odd number of times on $[0,1]$, 
implying it has an odd number of zeros. 
Therefore $\gamma([0,1])$ intersects $\partial A\cup\mathcal{I}$ an odd number of times with alternating orientation, 
so $W(\partial A\cup\mathcal{I})=1$. Then, by Stokes' theorem, we obtain
$$W(\partial A)=\oint_{\partial A}d\theta=\int_{A}d^2\theta=\int_A0=0,$$
which immediately implies
$$
1=W(\partial A\cup\mathcal{I})=W(\partial A)+W(\mathcal{I})=W(\mathcal{I}).
$$
\end{proof}
\end{section}

\begin{section}{The existence of a fixed point} 

At this point, one may like to make an argument along the following lines. 
Take an $S\subseteq \mathcal{I}$ satisfying $W(S)=0$. 
Then $S$ intersects $f(S)$ at least twice because otherwise one set would bound the other, 
violating the existence of a positive integral invariant. 
These intersections are fixed points, so we are done.

Unfortunately, this argument fails on its last line. 
If $S$ takes on multiple values of $y$ for the same value of $x$, 
it is possible for the intersection points of $S$ and $f(S)$ to occur at different $y$ values for the same $x$. 
Moreover, for components with winding number $0$, intersections cannot be guaranteed.

Using Figure \ref{fig:CounterEx} as a starting point, 
we will show that any such ``counterexample'' to the theorem admits a curve which violates the existence of a positive integral invariant. 
Our approach will need to be general enough to handle complicated geometries, like that in Figure \ref{fig:ComplexEx}. 
Thus, to simplify analysis, we make the following observation which will constrain the range of possibilities.

\begin{figure}[H]
\centering
\includegraphics[width=1.0\textwidth]{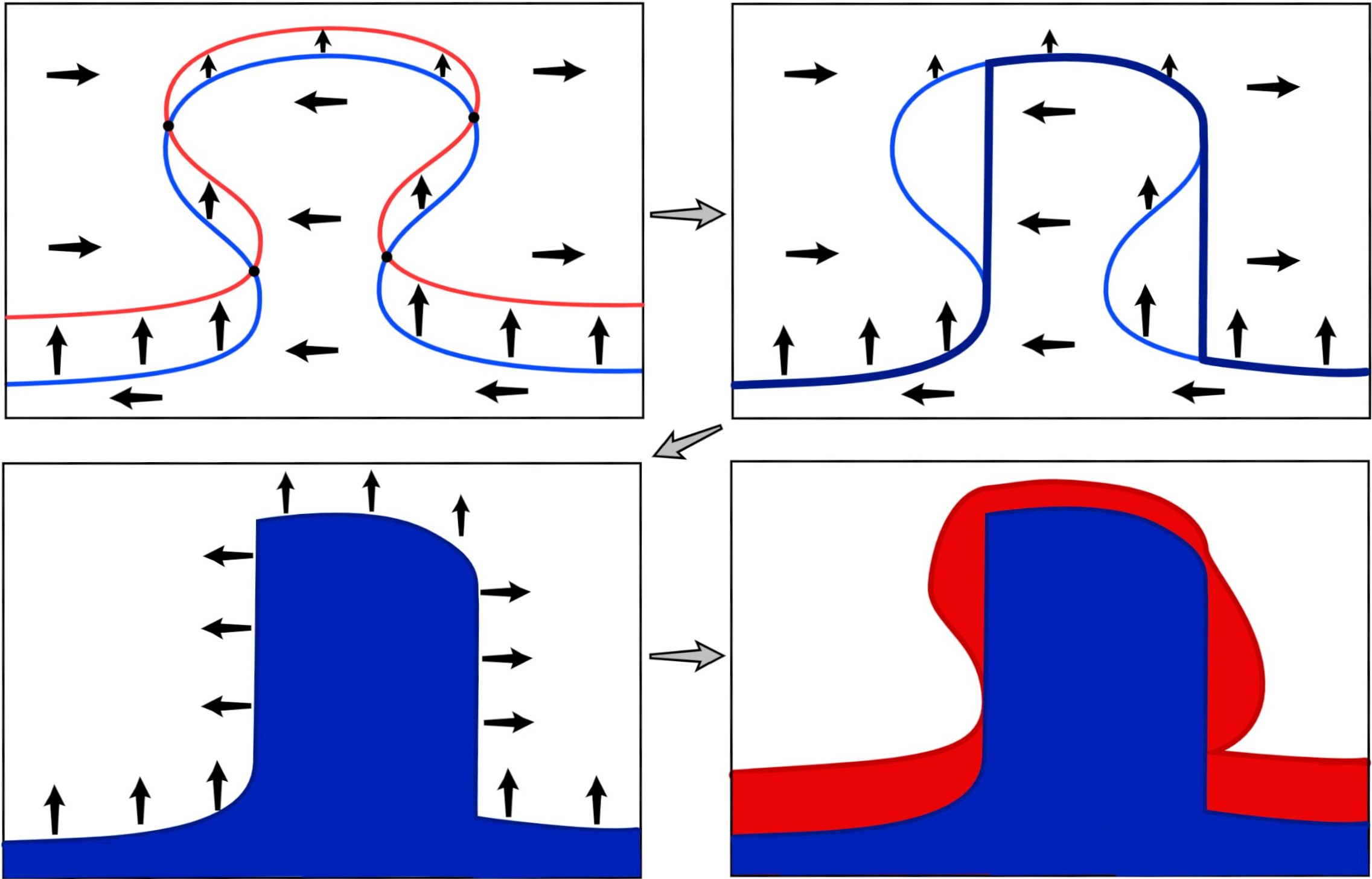}
\caption{\sl An element of $\mathcal{I}$ intersecting its image does not seem to guarantee 
the intersections are fixed points. 
In Panel 1 there are four intersection points, but clearly none of these are fixed points. 
To show this cannot happen, we construct the dark blue curve (Panel 2) which maps outside itself, 
thus contradicting the existence of a positive integral invariant.}
\label{fig:CounterEx}
\end{figure}

Take $S\subseteq \mathcal{I}$, and consider the following two cases:
\begin{enumerate}[label=\alph*.]
    \item For all $(x,y)\in S$, we have $f_2(x,y)>y$.
    \item For all $(x,y)\in S$, we have $f_2(x,y)<y$.
\end{enumerate}

For neither of these conditions to hold, exactly one of the following statements must be true. 
\begin{enumerate}
    \item There exists $(x_1,y_1)\in S$ such that $f_2(x_1,y_1)>y_1$ and $(x_2,y_2)\in S$ such that $f_2(x_2,y_2)<y_2$.
    \item Either for all $(x,y)\in S$ we have $f_2(x,y)\geq y$ or for all $(x,y)\in S$ we have $f_2(x,y)\leq y$ 
    (with equality attained in each case).
\end{enumerate}

In the first case, the intermediate value theorem implies there exist two points satisfying $f_2(x,y)=y$. 
In addition, $(x,y)\in S$ and $f(x,y)$ are in the same fibre of $\alpha$, 
which implies $f(x,y)=(x,y)$. 
Hence, each of these points are fixed points of $f$. 
In the second case, there is a fixed point where the inequality is sharp. 
Therefore we may assume either $f_2(x,y)>y$ or $f_2(x,y)<y$ on all of $S$.

Thus, it is sufficient to show there exists at least one component $S\subseteq \mathcal{I}$ neither 
mapping monotonically upwards nor downwards. 
To accomplish this, 
we assume for the sake of contradiction that each component of $\mathcal{I}$ maps monotonically. 
Then we show that, under this assumption, 
there exists a closed path $C\subset A$ which maps outside itself or inside itself, 
thus violating the existence of a positive integral invariant. 

Without loss of generality, from now on we assume 
that the outer boundary twists clockwise (right) and the inner boundary counterclockwise (left).
 
\subsection{Construction of the path}

Note that if any component $S\subseteq\mathcal{I}$ with $W(S)=\pm1$ is 
free of non-degenerate critical points (see definition below), 
then the region bounded by $S$ violates the existence of a positive integral invariant. 
Thus, for the remainder of this section, we will operate under the assumption that 
such non-degenerate critical points exist for each $S\subseteq\mathcal{I}$ 
(such non-degenerate critical points trivially exist for $S$ with $W(S)=0$). 
Here, we view $A$ as the $1-$periodic strip $(\mathbb{R}/\mathbb{Z})\times[0,1]$, as in Figure \ref{fig:ComplexEx}, rather than the geometric annulus, as in Figure \ref{fig:Intuition1}. Also let $\pi(x,y)=x$. 
As usual we define the $\epsilon-$neighborhood of a set $X\subseteq A$ as $\{(x,y)\in A: \inf\limits_{(x',y')\in X}|(x,y)-(x',y')|< \epsilon \}$.

\begin{itemize}[leftmargin=*]
    \item A point $(x_0,y_0)\in S\subseteq\mathcal{I}$ is a {\bf critical point} 
    if $\dfrac{\partial F_\phi}{\partial y}(x_0,y_0)=0$ (here $S$ is the component of $\mathcal{I}$ containing $(x_0,y_0)$). 
    In other words, the tangent to $S$ at $(x_0,y_0)$ is vertical.
    \item A critical point $(x_0,y_0)\in S\subseteq\mathcal{I}$ is 
    {\bf non-degenerate} if there exists $\epsilon>0$ such that 
    the $\epsilon-$neighborhood $U$ of $S\cap(\{x_0\}\times(0,1))$ 
    satisfies $\pi(U\cap S)\leq x_0$ or $\pi(U\cap S)\geq x_0$. 
    Intuitively, $(x_0,y_0)$ is non-degenerate if the vertical line $x_0\times(0,1)$ does not locally cross $S$.
    \item A non-degenerate critical point is 
    {\bf left-facing} (respectively, {\bf right-facing}) if its tangent is locally to its left (respectively, right). 
    Each of the non-degenerate critical points in Figure \ref{fig:CritPts} are left-facing.
    \item The {\bf outer flow} at a non-degenerate critical point is 
    the direction of the $x-$component of the vector field $f(x,y)-(x,y)$ on the vertical tangent near said critical point.
\end{itemize}

\begin{figure}[H]
\centering
\includegraphics[width=0.5\textwidth]{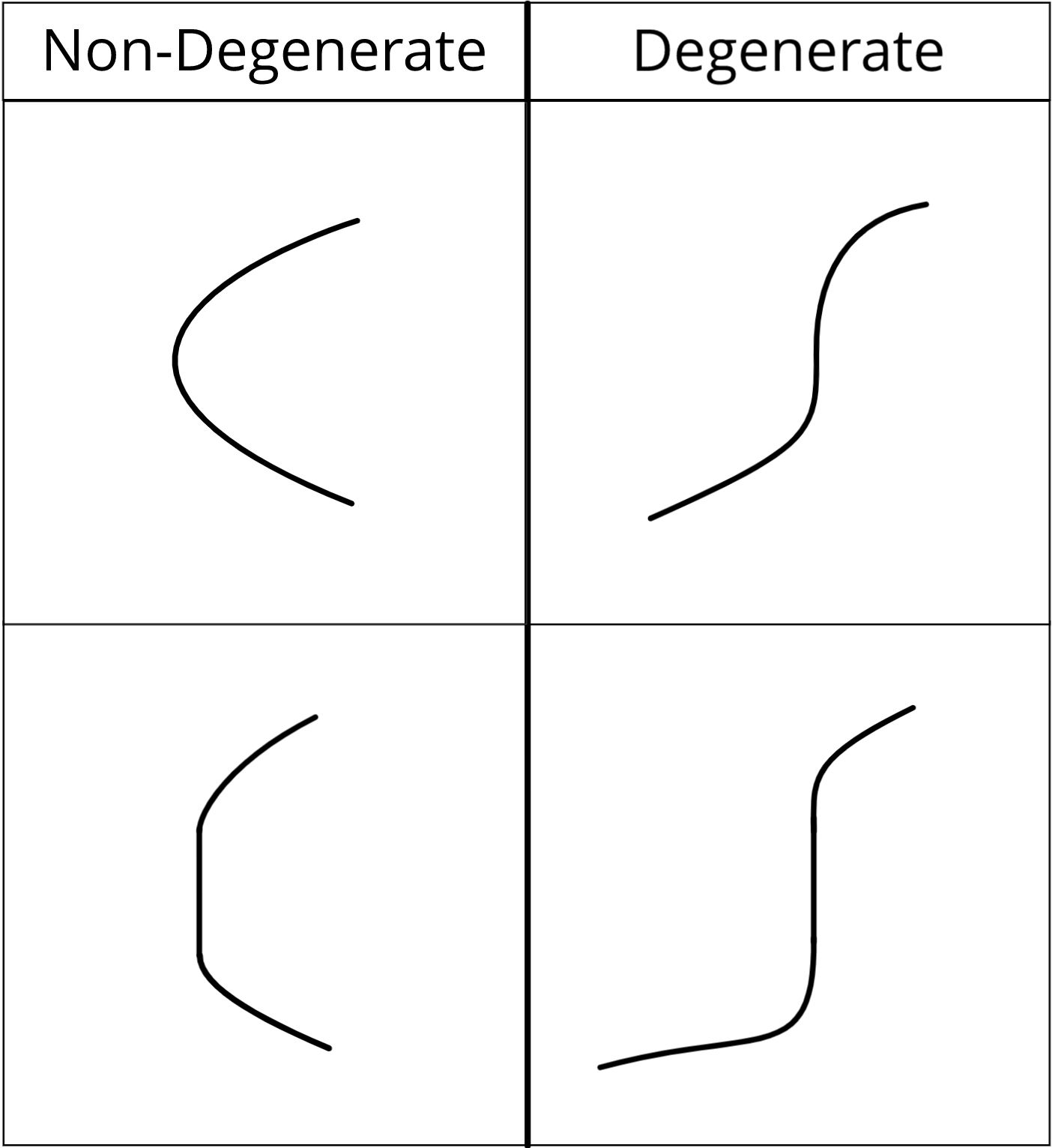}    
\caption{\sl Non-degenerate and degenerate critical points. 
Intuitively, non-degenerate critical points are those which 
eventually bend back in the same direction on both sides of the critical point.}
\label{fig:CritPts}
\end{figure}

What follows is a set of conventions which  
will be used to represent and algebraically manipulate the properties of the $S_i$ and the non-degenerate critical points.

\begin{itemize}[leftmargin=*]
    \item Enumerate the $n$ connected components of $\mathcal{I}$ as $\{S_i\}_{i=1}^n$.
    \item For each $S_i$, define the following:
    \begin{itemize}
        \item $u_i$ is the (constant) sign of $f_2(x,y)-y$      
        in $S_i$, denoting the vertical translation direction  of $S_i$ under $f$.
        \item $n_i$ is the number of non-degenerate critical points of $S_i$.
        \item $\{c_{i,k}\}_{k=1}^{n_i}$ is the set of non-degenerate critical points of $S_i$, each equipped with a value:
        \begin{itemize}
            \item $c_{i,k}=+1$ if the non-degenerate critical point is right-facing,
            \item $c_{i,k}=-1$ if the non-degenerate critical point is left-facing.
        \end{itemize}
        \item For 
        $c_{i,k}$, 
        define a corresponding $f_{i,k}$, denoting the direction of the outer flow at $c_{i,k}$:
        \begin{itemize}
            \item $f_{i,k}=+1$ if the outer flow at $c_{i,k}$ is to the right, 
            \item $f_{i,k}=-1$ if the outer flow at $c_{i,k}$ is to the left.
        \end{itemize}
        \item For   
        $c_{i,k}$, define $T(c_{i,k})=u_ic_{i,k}$, called the {\bf ``type'' of $c_{i,k}$}.
    \end{itemize}
\end{itemize}

Now we can begin constructing the closed curve $C$. 
For each non-degenerate critical point $c_{i,k}$ of type $+1$ (that is, with $T(c_{i,k})=u_ic_{i,k}=1$) we define a directed path departing from it as follows.

\begin{enumerate}
    \item Begin traveling vertically in the $v=-f_{i,k}$ direction 
    ({\it i.e.,} 
    $v=+1$ means up and $v=-1$ means down).  
    
    \item Once another component $S_{i'}$ of $\mathcal{I}$ is intersected, travel along it in the $t=-u_{i'}vf_{i,k}$ direction 
    ({\it i.e.,} right if $t=+1$ and left if $t=-1$) 
    until another non-degenerate critical point $c_{i',k'}$ is reached.
\end{enumerate}

\begin{figure}[H]
\centering
\includegraphics[width=1.0\textwidth]{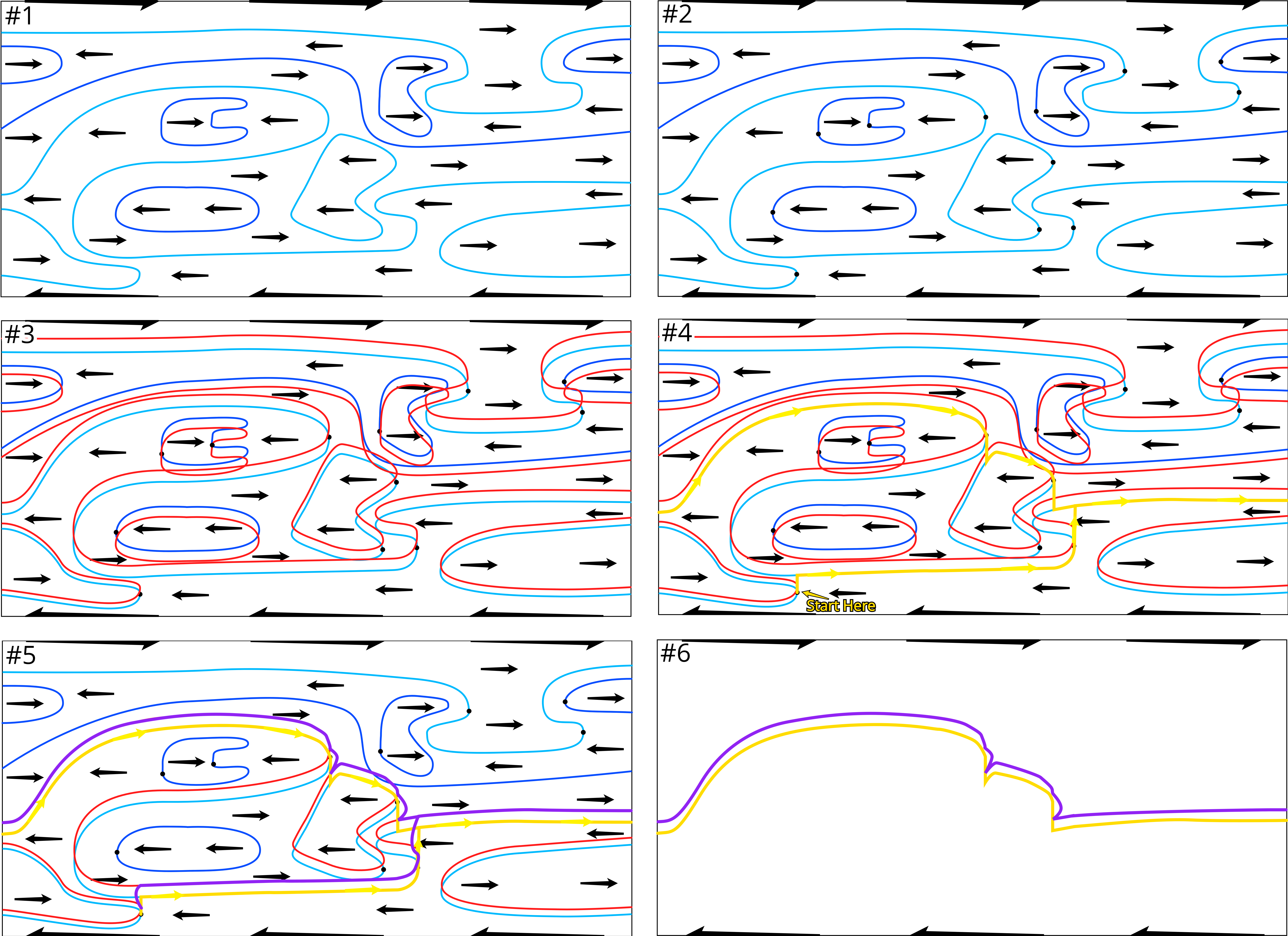}
\caption{\sl A sample application of Lemmas \ref{lemma:PathExistence} and \ref{lemma:LoopBehavior}. 
Panel 1 depicts the set of invariant curves from Figure \ref{fig:ComplexEx}. 
The dark blue curves map downwards and the light blue curves upwards. 
In Panel 2 the type $+1$ non-degenerate critical points are shown in black. 
In Panel 3 the images (red) of the invariant curves are shown. 
In Panel 4 we generate a path (yellow) as described in Lemma \ref{lemma:PathExistence}. 
In Panel 5 we show the image of the generated path (purple). 
Finally, Panel 6 isolates the relevant portion of the generated path (yellow) and its image (purple). 
As we would hope, the path is enclosed by its image, implying a violation of the existence of a positive integral invariant.}
\label{fig:AlgoEx}
\end{figure}

\begin{lemma}\label{lemma:PathExistence}
The path from any $c_{i,k}$ with type $T(c_{i,k})=+1$ exists.  
In fact, 
(1) the vertical part of the path intersects another component and never hits the boundary of $A$,  
and (2) 
the path terminates at a critical point $c_{i',k'}$ that satisfies $T(c_{i',k'})=+1$.
\end{lemma}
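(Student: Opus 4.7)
The plan is to prove (1) and (2) separately: (1) is an intermediate value argument applied to $F_\phi$ along the vertical segment, while (2) is a direct analysis of the critical-point geometry on $S_{i'}$.

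For (1), non-degeneracy of $c_{i,k} = (x_0, y_0)$ forces $\partial F_\phi / \partial y\,(c_{i,k}) = 0$ together with $\partial^2 F_\phi / \partial y^2\,(c_{i,k}) \neq 0$, so the restriction of $F_\phi$ to the vertical line $\{x = x_0\}$ has a strict local extremum at $y_0$ with definite second derivative. Consequently $F_\phi(x_0, y)$ carries a constant nonzero sign on a punctured neighbourhood of $y_0$ along the vertical; by definition this sign is $f_{i,k}$. On the other hand, the twist hypothesis (preserved under the conjugation by $T$) guarantees $F_\phi > 0$ on $\R/\Z \times \{1\}$ and $F_\phi < 0$ on $\R/\Z \times \{0\}$. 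Since $v = -f_{i,k}$ sends the vertical ray from $c_{i,k}$ precisely toward the boundary on which $F_\phi$ has sign $-f_{i,k}$, the intermediate value theorem yields a zero of $F_\phi$ along the ray strictly inside $A$; the nearest such zero is a point $p$ on some component $S_{i'} \subseteq \mathcal{I}$, which proves (1).

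For (2), I would first use Theorem \ref{thm:PTT} once more (or restrict to a measure-full set of $\phi$) to ensure $p$ is a regular point of $S_{i'}$, so that the horizontal direction of travel along $S_{i'}$ at $p$ is well defined. Plugging $v = -f_{i,k}$ into $t = -u_{i'} v f_{i,k}$ collapses immediately to $t = u_{i'}$. Now follow $S_{i'}$ in the sense of $t$: the $x$-coordinate is strictly monotone until the tangent first becomes vertical, and this must happen at a non-degenerate critical point $c_{i',k'}$, which exists by the standing assumption (trivially when $W(S_{i'}) = 0$; for winding $\pm 1$ components, a component free of such critical points would already enclose a region mapping properly into itself and contradict the positive integral invariant). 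When $t = +1$ the path arrives at $c_{i',k'}$ with $x$ increasing, so $c_{i',k'}$ is a local maximum of $x$ on $S_{i'}$ and $S_{i'}$ near $c_{i',k'}$ lies locally to the left of the vertical tangent there; under the facing convention this makes $c_{i',k'}$ right-facing, so $c_{i',k'} = +1$. Symmetrically, $t = -1$ yields $c_{i',k'} = -1$. In both cases $c_{i',k'} = t$, so $T(c_{i',k'}) = u_{i'} c_{i',k'} = u_{i'} t = u_{i'}^2 = +1$.

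The central difficulty is not in any single step but in aligning the four sign conventions -- for $u_i$, $c_{i,k}$, $f_{i,k}$, and the direction of travel $t$ -- so that the algebra of (2) telescopes cleanly to $+1$. A subsidiary technical point is justifying that monotone horizontal travel along a winding-$\pm 1$ component cannot wrap around the annulus indefinitely without encountering a critical point; this uses compactness of $S_{i'}$ together with the assumed existence of critical points on it. Finally, a brief remark is needed about the edge case in which $p$ happens to coincide with a critical point of $S_{i'}$, which can be handled either by direct inspection of the facing at $p$ or by a small perturbation of $\phi$.
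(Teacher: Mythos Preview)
Your proof is correct and tracks the paper's own argument closely: the paper likewise splits into (i) the vertical segment meets another component before the boundary, (ii) the walk along $S_{i'}$ reaches a non-degenerate critical point, and (iii) the algebraic identity $t = -u_{i'}vf_{i,k} = u_{i'}$ together with $c_{i',k'} = t$ gives $T(c_{i',k'}) = u_{i'}^2 = +1$. Your direct IVT argument for (1) and the paper's contradiction argument (``adjacent to the lower boundary forces $f_{i,k}=-1$, hence $v=+1$'') are the same observation in different dress.

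One small correction: the paper's definition of \emph{non-degenerate} is purely geometric (the component $S$ stays locally on one side of the vertical line through $c_{i,k}$) and does \emph{not} require $\partial^2 F_\phi / \partial y^2(c_{i,k}) \neq 0$; a quartic-type tangency would still qualify. Your conclusion---that $F_\phi$ has constant sign $f_{i,k}$ on a punctured vertical neighbourhood of $c_{i,k}$---remains valid under the geometric definition, since the vertical line meets $S_i$ only at $c_{i,k}$ locally and $0$ is a regular value of $F_\phi$, but you should justify it that way rather than via the second derivative. The additional worries you raise (regularity at $p$, the edge case $p = c_{i',k'}$, wrapping around indefinitely) are either automatic from $0$ being a regular value or handled, as in the paper, simply by compactness of $S_{i'}$ and the standing assumption that every component carries a non-degenerate critical point.
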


Adjacent non-degenerate critical points alternate in direction. 
Thus, because each component of $\mathcal{I}$ has at least one non-degenerate critical point, 
we can conclude that each component has at least one non-degenerate critical point of type $+1$. 
Then, since $\mathcal{I}$ is a compact 1-manifold, 
Lemma \ref{lemma:PathExistence} directly implies there is a finite and non-zero number of non-degenerate critical points of type $+1$.

\begin{lemma}\label{lemma:LoopBehavior}
The collection of directed paths from Lemma \ref{lemma:PathExistence} contains a closed path $C\subset A$. 
Moreover, $f(C)$ never crosses over $C$, and thus violates the existence of a positive integral invariant.
\end{lemma}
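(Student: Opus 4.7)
The plan proceeds in two stages: produce the closed path $C$ by a finite pigeonhole argument, then verify the one-sidedness of $f(C)$ by a segment-wise local analysis exploiting the fact that, in the conjugated coordinates, $f$ translates $\mathcal{I}$ purely vertically.

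\textbf{Existence of $C$.} By Lemma~\ref{lemma:PathExistence}, the construction defines a well-defined self-map $\Phi$ on the finite, nonempty set $\mathcal{N}$ of type-$+1$ non-degenerate critical points, sending each $c_{i,k}$ to the terminal critical point of the path starting at it. Iterating $\Phi$ from any $c\in\mathcal{N}$ must eventually revisit a previously visited critical point, yielding a cycle $c^{(1)}\to c^{(2)}\to\cdots\to c^{(m)}\to c^{(1)}$. The concatenation of the corresponding path segments is a closed, piecewise-smooth, directed curve $C\subset A$ consisting of alternating vertical segments (along lines $\{x=x_0\}$) and horizontal arcs (along components of $\mathcal{I}$).

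\textbf{One-sidedness of $f(C)$.} I would analyze $f$ on each type of segment separately. Along a horizontal arc $\sigma\subset S_{i'}$ of $\mathcal{I}$, the identity $f_1(x,y)=x$ on $\mathcal{I}$ forces $f(\sigma)$ to be a pure vertical translate of $\sigma$ in the direction $u_{i'}$; combined with the prescribed traversal direction $t=-u_{i'}vf_{i,k}=u_{i'}$ (using $v=-f_{i,k}$ and $f_{i,k}^2=1$), this places $f(\sigma)$ on a definite side of the directed arc. Along a vertical segment $\tau\subset\{x=x_0\}$ issuing from $c_{i,k}$, the two endpoints lie in $\mathcal{I}$ and are moved purely vertically, while the interior is displaced horizontally by $F_\phi(x_0,y)=f_1(x_0,y)-x_0$, whose sign on $\tau$ equals $f_{i,k}$ (the outer flow at $c_{i,k}$); the choice $v=-f_{i,k}$ then puts $f(\tau)$ on a definite side of $\tau$. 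The direction formulas $v=-f_{i,k}$ and $t=u_{i'}$ are engineered precisely so that these local sides agree at every junction (type-$+1$ critical point) where incoming and outgoing segments meet, yielding a globally one-sided $f(C)$. Since $C$ is a closed curve, it bounds a region $R\subset A$ (passing to $\widetilde A$ and using a fundamental domain if $C$ has nonzero winding); one-sidedness then gives $f(R)\subsetneq R$ or $R\subsetneq f(R)$, contradicting $\mu(R)=\mu(f(R))$.

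\textbf{Main obstacle.} I expect the bulk of the work to be the combinatorial verification at the corners of $C$: at every type-$+1$ critical point where an incoming arc meets an outgoing segment (or vice versa), one must check that the local sidedness of the image on the two sides of the junction is the same. This requires a short case analysis over the signs of $u_i$, $c_{i,k}$, and $f_{i,k}$, probably mediated by a small auxiliary observation linking $c_{i,k}$ with the sign of the relevant second partial derivative of $F_\phi$ at the critical point. A secondary subtlety is the case when $C$ wraps non-trivially around the annulus, so that ``the region bounded by $C$'' requires interpretation via the universal cover $\widetilde A$; the measure-preservation contradiction should survive this lift either by restricting to a fundamental domain or by integrating the vertical displacement of $f$ around $C$.
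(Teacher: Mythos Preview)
Your proposal is correct and follows essentially the same route as the paper: the closed loop comes from iterating the path map on the finite set of type-$+1$ critical points, and the non-crossing of $f(C)$ over $C$ is checked by a segment-by-segment analysis (arcs in $\mathcal{I}$ move purely vertically, vertical segments move horizontally with sign $f_{i,k}$), with the compatibility at the junctions handled by a finite case analysis over the signs of $u_i$, $c_{i,k}$, $f_{i,k}$. The paper carries out exactly this casework explicitly via two eight-row tables (one for departures from critical points, one for arrivals of the vertical onto the next component), and closes with a short local-to-global step (``$f(C)$ cannot non-locally cross $C$ since $f$ is a diffeomorphism'') in place of your universal-cover remark; your anticipated ``main obstacle'' is precisely where the paper spends its effort.
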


Note that this is not the same as requiring that $C$ and $f(C)$ have an empty intersection. We only require that $f(C)$ lies entirely within the closure of one of the two connected components of $A\setminus C$.

The remainder of this section is focused on the proof of 
Lemma \ref{lemma:PathExistence} and Lemma \ref{lemma:LoopBehavior}.

\subsection{Proof of Lemma \ref{lemma:PathExistence}}

Let $c_{i,k}$ be an arbitrary non-degenerate critical point on $S_i\subseteq A$ with $T(c_{i,k})=+1$. 
The proof of Lemma \ref{lemma:PathExistence} can be reduced to three facts.  

\begin{enumerate}
    \item The vertical path taken from $c_{i,k}$ (as defined above Lemma \ref{lemma:PathExistence}) 
    intersects another component $S_{i'}\subseteq\mathcal{I}$ and, in particular, does not intersect $\partial A$.
    \item Once $S_{i'}$ is hit, the continuation along $S_{i'}$ eventually reaches another non-degenerate critical point $c_{i',k'}$.
    \item The terminal non-degenerate critical point $c_{i',k'}$ is of type $+1$.
\end{enumerate}

\subsubsection*{Part 1}

Assume for the sake of contradiction that the vertical portion of the path from some $c_{i,k}$ of type $+1$ intersects the boundary. 
Then the component $S_i$ containing $c_{i,k}$ is either adjacent to the upper boundary or the lower boundary. 
If $S_i$ is adjacent to the lower boundary, then, 
because the lower boundary maps to the left, we have $f_{i,k}=-1$, 
which implies $v=-f_{i,k}=1$. 
Therefore, the vertical portion of the path from $c_{i,k}$ 
cannot intersect the lower boundary. The analogous argument also holds for $S_i$ adjacent to the upper boundary, so this is a contradiction. Thus the vertical taken from $c_{i,k}$ cannot intersect the boundary, and instead intersects another component of $\mathcal{I}$.

\subsubsection*{Part 2}
Because $\mathcal{I}$ is compact, each component has finite length. 
Moreover, we asserted earlier that each component has at least two non-degenerate critical points. 
Therefore, traveling along any component for long enough, we   
eventually reach a non-degenerate critical point.

\subsubsection*{Part 3}

Recall the rules for generating the path from $c_{i,k}$ to $c_{i',k'}$: 
The direction of the vertical is $v=-f_{i,k}$ and the direction along $S_{i'}$ is $t=-u_{i'}vf_{i,k}$. 
Combining these equations gives us the relation $t=-u_{i'}(-f_{i,k})f_{i,k}=u_{i'}$. 
We also have $c_{i',k'}=t$ because if we travel along a component to the right (respectively, left), 
then the non-degenerate critical point we intersect must be right-facing (respectively, left-facing). 
This allows us to directly compute the type of the terminal point of the path: $T(c_{i',k'})=c_{i',k'}u_{i'}=u_{i'}^2=+1$.

\subsection{Proof of Lemma \ref{lemma:LoopBehavior}}

To prove Lemma \ref{lemma:LoopBehavior}, 
it will be sufficient to show that the path does not cross over its image.

\subsubsection*{Such a loop exists}

By Lemma \ref{lemma:PathExistence}, 
the path from any non-degenerate critical point of type $+1$ terminates at another of type $+1$. 
Then, recalling there are finitely many non-degenerate critical points in $\mathcal{I}$, 
any sequence of paths starting on a type $+1$ point must eventually repeat itself, 
thus forming a closed loop $C$.

\subsubsection*{The loop does not cross over its image}

We have, by supposition, that each component of $\mathcal{I}$ maps uniformly either up or down. 
Thus, the only possible trouble points are
\begin{enumerate}[label=\alph*)]
    \item at non-degenerate critical points transitioning to the vertical portion of a path,
    \item on the vertical portions themselves, or  
    \item at the intersection of the vertical part of a path with another component.
\end{enumerate}
\medskip

\noindent 
\textbf{(a) No crossovers when leaving a non-degenerate critical point}\medskip

\noindent 
There are eight ways a type $+1$ non-degenerate critical point may be approached and departed from. 
The cases are listed below.

\begin{table}[H]
\centering
\small
\tabcolsep=0.11cm
\begin{tabular}{|l|l|l|l|l||l|}
\hline
\multicolumn{6}{|l|}{(1) Direction of Vertical From Critical Point} \\ \hline
\begin{tabular}[c]{@{}l@{}}Case \\ Number\end{tabular} &  \begin{tabular}[c]{@{}l@{}}Component\\Map Direction\end{tabular} & \begin{tabular}[c]{@{}l@{}}Critical Point\\Direction\end{tabular} & \begin{tabular}[c]{@{}l@{}}Outer Flow\\Direction\end{tabular} &
\begin{tabular}[c]{@{}l@{}}Approach\\Direction\end{tabular} &
\begin{tabular}[c]{@{}l@{}}Vertical \\Travel \\Direction\end{tabular} \\ \hline
1 & Up & Right & Right & Up & Down \\ \hline
2 & Up & Right & Right & Down & Down \\ \hline
3 & Up & Right & Left & Up & Up \\ \hline
4 & Up & Right & Left & Down & Up \\ \hline
5 & Down & Left & Right & Up & Down \\ \hline
6 & Down & Left & Right & Down & Down \\ \hline
7 & Down & Left & Left & Up & Up \\ \hline
8 & Down & Left & Left & Down & Up \\ \hline
\end{tabular}
\end{table}

The easiest way to check these cases is pictorially, as in Figure \ref{fig:CritCases} below.

\begin{figure}[H]
\centering
\includegraphics[width=\textwidth]{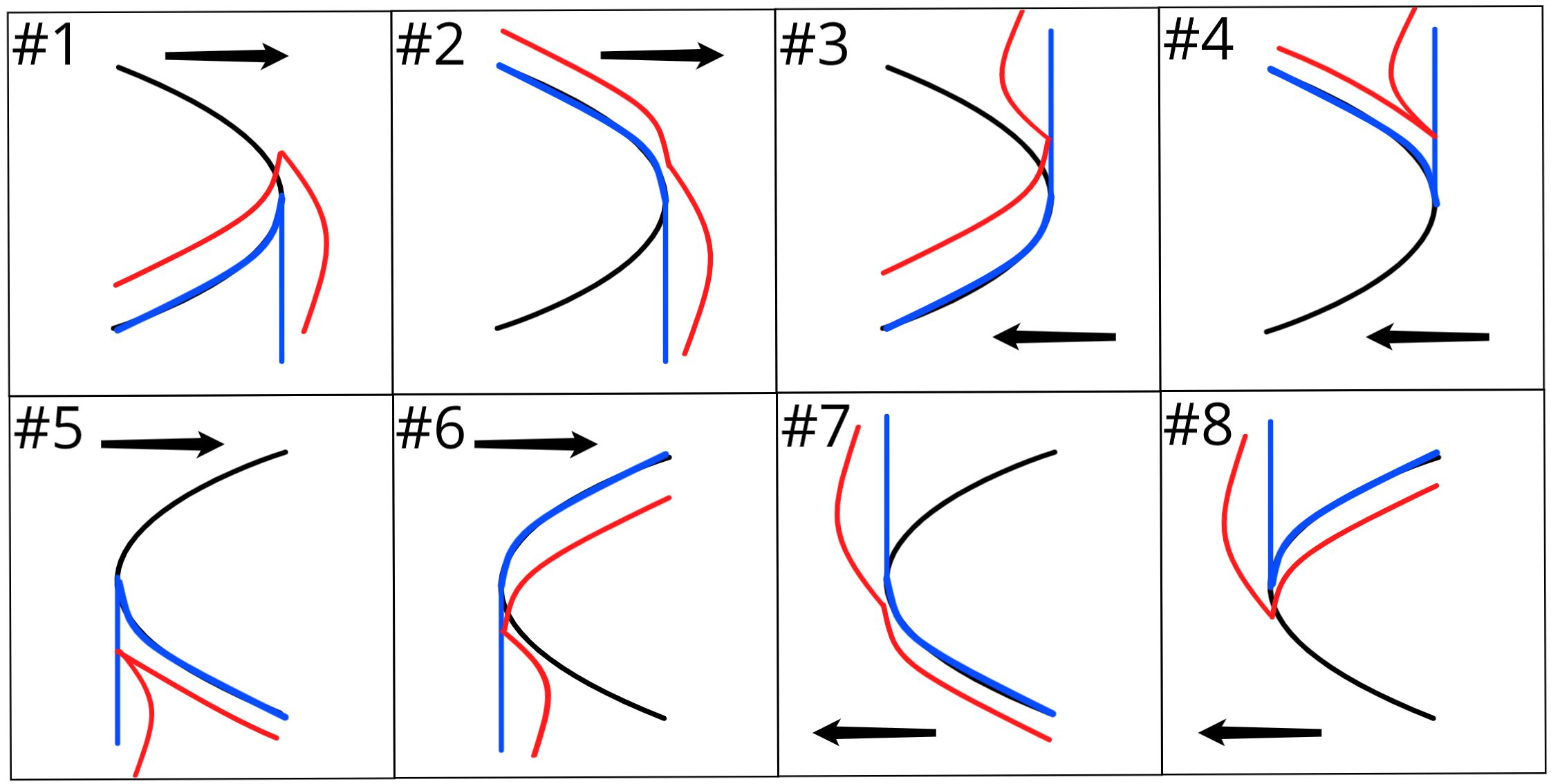}
\caption{\sl Casework for the path when leaving a non-degenerate critical point, 
with the path (blue), its image (red), and 
the path 
$S_i$ (black). 
The arrow in each frame denotes the direction of the outer flow.}
\label{fig:CritCases}
\end{figure}

The path maps monotonically either up or down when restricted to any given component of $\mathcal{I}$, 
and the outer flow direction determines to which side of itself the vertical portion of the path is mapped. 
As we can see, in each of these cases the image of the path does not cross over (but may intersect) the path.
\medskip

\noindent 
\textbf{(b) No crossovers on the vertical portions themselves}   
\medskip

\noindent 
In this case, each vertical portion of the path is confined to a region bound by $\partial A\cup\mathcal{I}$. 
Moreover, $\mathcal{I}$ is the set of points for which the horizontal component $f_1(x)-x$ of the flow $f(x,y)-(x,y)$ changes direction, 
which implies the horizontal component of the flow has constant direction in each of these regions. 
Therefore, each vertical component of the path is mapped strictly either to its left or its right. 
So, the image of these verticals does not intersect the original vertical, except perhaps at its endpoints, and thus does not cross it.
\eject

\noindent 
\textbf{(c) No problems at the intersection of a vertical path with another component}
\medskip

\noindent 
As in Part (a), we  show this by casework, tabulating the set of possible situations.

\begin{table}[H]
\centering
\small
\tabcolsep=0.11cm
\begin{tabular}{|l|l|l|l||l|}
\hline
\multicolumn{5}{|l|}{Direction of Travel From Vertical Intersection} \\ \hline
\begin{tabular}[c]{@{}l@{}}Case \\ Number\end{tabular} & \begin{tabular}[c]{@{}l@{}}Component\\Map Direction\end{tabular} & 
\begin{tabular}[c]{@{}l@{}}Vertical Travel\\Direction\end{tabular} & \begin{tabular}[c]{@{}l@{}}Outer Flow\\Direction\end{tabular} & 
\begin{tabular}[c]{@{}l@{}}Travel\\Direction\end{tabular} \\ \hline
1 & Up & Up & Right & Left \\ \hline
2 & Up & Up & Left & Right \\ \hline
3 & Up & Down & Right & Right \\ \hline
4 & Up & Down & Left & Left \\ \hline
5 & Down & Up & Right & Right \\ \hline
6 & Down & Up & Left & Left \\ \hline
7 & Down & Down & Right & Left \\ \hline
8 & Down & Down & Left & Right \\ \hline
\end{tabular}
\end{table}

These cases are worked out pictorially in Figure \ref{fig:VertCases}. 

\begin{figure}[H]
\centering
\includegraphics[width=0.9\textwidth]{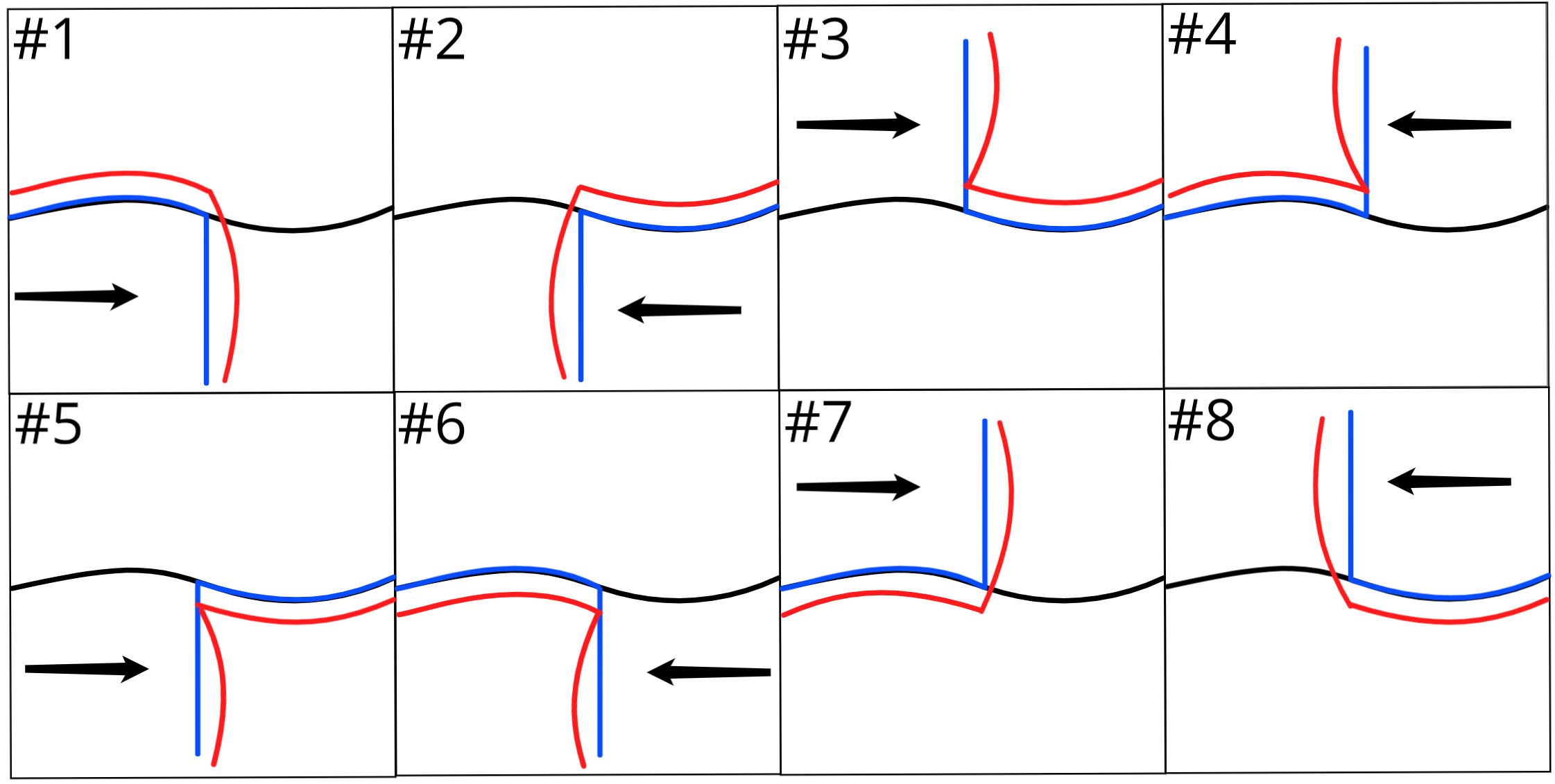}
\caption{\sl Casework for component intersection. 
The elements represented by the colors are the same as in Figure \ref{fig:CritCases}.}
\label{fig:VertCases}
\end{figure}

The portion of the path on the component maps monotonically up or down, 
and the local flow direction determines which side of the vertical the image of the vertical under $f$ is mapped to. 
And as we can see, in each of these cases, the image of the path does not cross over the original, just as in the previous section.
\bigskip

Therefore $f(C)$ never locally crosses over $C$. 
Can $f(C)$ non-locally cross over $C$? 
No, because in this case $f$ would 
not be a diffeomorphism since $f(C)$ 
would either cross over itself or reverse orientation. 
Hence, the region enclosed by $C$ violates the existence of a positive integral invariant under $f$, 
producing a contradiction and proving the existence of at least one fixed point of $f:A\rightarrow A$. 
\end{section}

\begin{section}{There cannot be only one fixed point}

The proof that there is a second fixed point is more delicate. 
The approach will be very similar to that given above: 
we assume for the sake of contradiction that there is only one fixed point, 
and use this to produce a contradiction to the existence of a positive integral invariant. 
However, several modifications of the argument are needed to account for the fact that 
the existence of a fixed point prevents $0$ from necessarily being a regular value of 
the same $F$ as before.

Let $f$ be as in the statement of Theorem \ref{thm:PLGT} and 
having, for the sake of contradiction, a single fixed point $x^\ast$. 
Take a closed ball $B_\epsilon(x^\ast)$ about $x^\ast$, 
with $\epsilon$ sufficiently small so as not to intersect either of the boundaries. 
Next, apply the same construction as in Subsection 4.2, 
except modify the family of functions $F_\phi:A\rightarrow\mathbb{R}$ to 
$F_{\phi,\epsilon}:A\setminus  B_\epsilon(x^\ast)\rightarrow\mathbb{R}$, 
so
$$F_{\phi,\epsilon}=F_{\phi}\big|_{A\setminus B_\epsilon(x^\ast)}.$$
Now $F_{\phi,\epsilon}$ satisfies Theorem \ref{thm:PTT} for all $\epsilon>0$ 
because $A\setminus  B_\epsilon(x^\ast)$ is fixed point-free. 
Thus each connected component of $F_{\phi,\epsilon}^{-1}(0)$ is 
a 1-submanifold of $A\setminus B_\epsilon(x^\ast)$. 
Moreover, as we take $\epsilon$ closer to $0$, $F_{\phi,\epsilon}^{-1}(0)$ is extended towards $x^\ast$. 
At this point, the scenario may look like that in Figure \ref{fig:FPCase1}. 

\begin{figure}[H]
\centering
\includegraphics[width=0.9\textwidth]{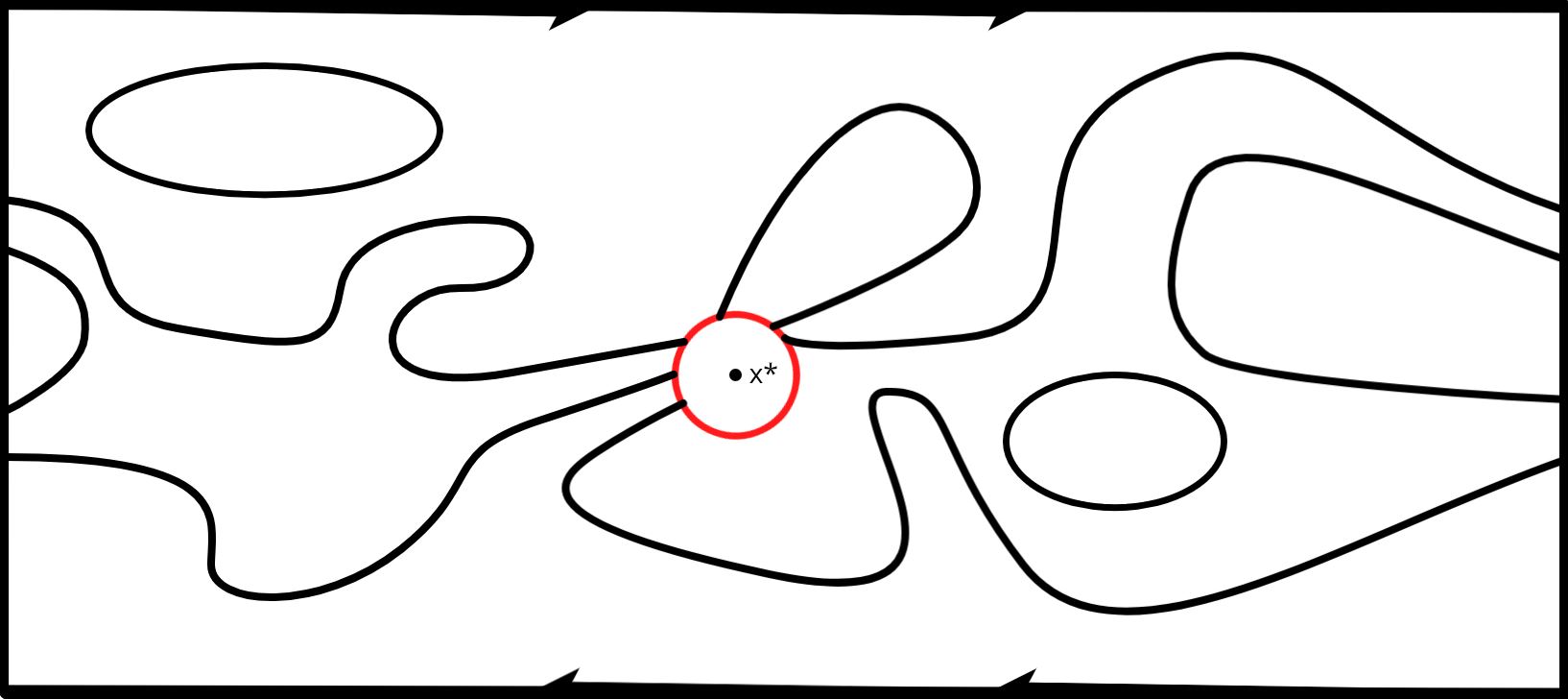}   
\caption{\sl The set of invariant curves of $F_{\phi,\epsilon}^{-1}(0)$: 
the invariant curves (black), the fixed point $x^\ast$, and $\partial B_{\epsilon}(x^\ast)$ (red).}
\label{fig:FPCase1}
\end{figure}

From here, we complete each of the components intersecting $\partial B_\epsilon(x^\ast)$ 
so that we can apply Lemmas \ref{lemma:PathExistence} and \ref{lemma:LoopBehavior}. 
To do so, we append straight line paths from each component to itself. 
For example, a completion of the invariant curves in Figure \ref{fig:FPCase1} 
is shown in Figure \ref{fig:FPCase2}.

\begin{figure}[H]
\centering
\includegraphics[width=0.9\textwidth]{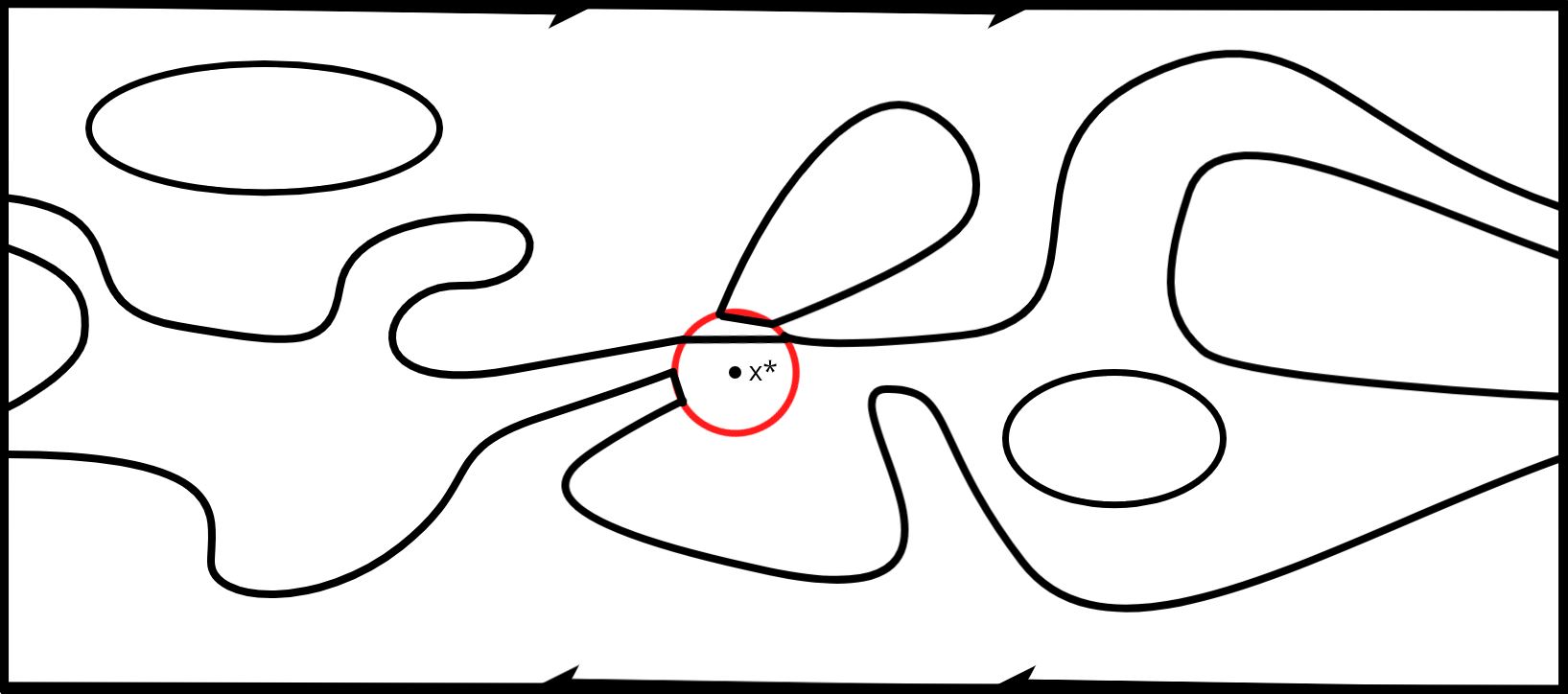}   
\caption{\sl The completion of the set of invariant curves: 
the completed invariant curves (black), the fixed point $x^\ast$, and $\partial B_{\epsilon}(x^\ast)$ (red).}
\label{fig:FPCase2}
\end{figure}
 
For $\epsilon>0$, each component intersects $\partial B_\epsilon(x^\ast)$ 
either twice or not at all because of $\partial F_{\phi,\epsilon}^{-1}(0)=\emptyset$. 
Thus each component is either a closed curve 
or has the end points of its closure on $\partial B_\epsilon(x^\ast)$.

Moreover, for $\epsilon>0$ sufficiently small, there exists some component $S$ of $F_{\phi,\epsilon}^{-1}(0)$ 
with winding number different from $0$ which either does not intersect $\partial B_\epsilon(x^\ast)$, 
intersects $\partial B_\epsilon(x^\ast)$ once tangentially, 
or intersects $\partial B_\epsilon(x^\ast)$ twice, 
with the second time being only after winding around $A$ once. 
This follows immediately from the argument given in Section 4.4.

It is important to note that, 
by appending these line segments to each component, 
they are very likely now only $C^0$, rather than $C^1$; 
however, this can clearly only be the case at the (at most) 
finitely many points belonging to 
$\overline{F_{\phi,\epsilon}^{-1}(0)}\cap \partial B_\epsilon(x^\ast)$.

Under this construction, 
Lemma \ref{lemma:PathExistence} still holds and Lemma \ref{lemma:LoopBehavior} 
holds outside $B_\epsilon(x^\ast)$. 
Lemma \ref{lemma:PathExistence} almost works straight out of the box, 
with almost all of the definitions applying the same as before. 
The only place where this fact isn't necessarily clear is in the definition of critical points and non-degenerate critical points. 
It turns out this definition, too, works as originally stated: 
``We say that a point on a component is a critical point if it admits a vertical tangent line''. 
Just to provide additional clarity here, some 
typical examples are exhibited in Figure \ref{fig:C0CritPts} below.
It is also easy to verify that Lemma \ref{lemma:LoopBehavior} continues to hold outside of $B_\epsilon(x^\ast)$.

\begin{figure}[H]
\centering
\includegraphics[width=0.4\textwidth]{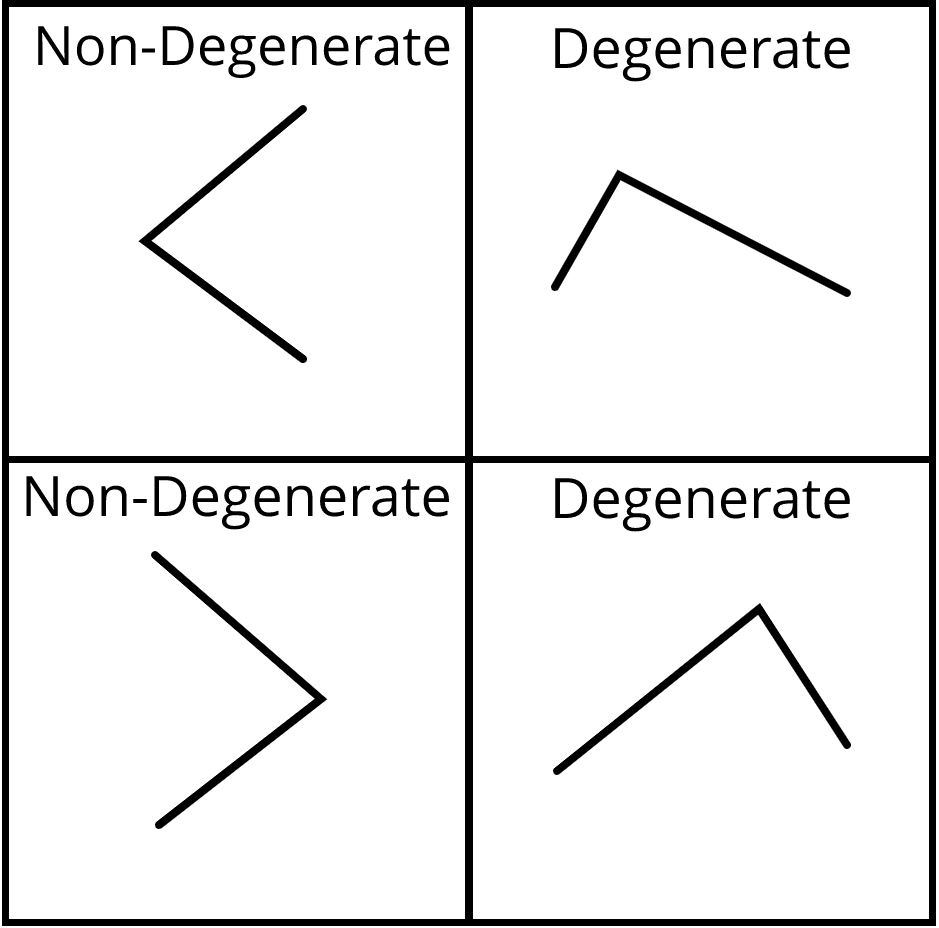}  
\caption{\sl Examples of $C^0$ degenerate and non-degenerate critical points.}
\label{fig:C0CritPts}
\end{figure}

Now all that is left to show is that 
there exists an $\epsilon$ sufficiently small that the measure of the region enclosed by the path $C$ is 
different from the area enclosed by $f(C)$. 
In particular, the amount of error introduced by the failure of Lemma \ref{lemma:LoopBehavior} within $B_\epsilon(x^\ast)$ is 
strictly less than the difference in area enclosed by $C$ and $f(C)$ outside of $B_\epsilon(x^\ast)$. 

To accomplish this, 
we first prove the existence of a lower bound $K$ 
(independent of $\epsilon$)  
on the area differential between $C$ and $f(C)$ outside $B_\epsilon(x^\ast)$. 
Then we  
show that the maximum possible error introduced within $B_\epsilon(x^\ast)$ tends to $0$ as $\epsilon\to0$. 
This would imply we can pick $\epsilon$ sufficiently small so               
that the path is guaranteed to map to a curve which contains strictly more or less area. 
We show these two facts in the remainder of the paper.

\subsection{There exists of a lower bound $K>0$}

We prove the existence of this lower bound in two steps: 
(1) We show that the closed loop from Lemma \ref{lemma:LoopBehavior} has non-zero winding number, and 
(2) we use this fact to construct such a lower bound.

\subsubsection{The path has non-zero winding number}

Suppose for the sake of contradiction that the path generated by Lemma \ref{lemma:LoopBehavior} were to have winding number $0$. 
Then the path may be lifted homeomorphically into  
$\mathbb{R}\times[0,1]$. 
Thus, there must be some region $R$ (a connected component of $A\setminus\mathcal{I}$) in which the path takes at least two verticals. 
Fix either the leftmost or rightmost of these verticals. 
Since there are finitely many critical points, there must exist another vertical of the path in $R$, 
which is horizontally closest to the leftmost (respectively, rightmost) vertical.
Then, by construction, $R$ is composed of points all mapping in the same horizontal direction under $f$. 
Without loss of generality, suppose this direction were to the right and let $C$ be the path containing those verticals. 
There are exactly four distinct cases in which the two verticals can be connected. 
These are shown in Figure \ref{fig:WindingCase}. 

\begin{figure}[H]
\centering
\includegraphics[width=0.6\textwidth]{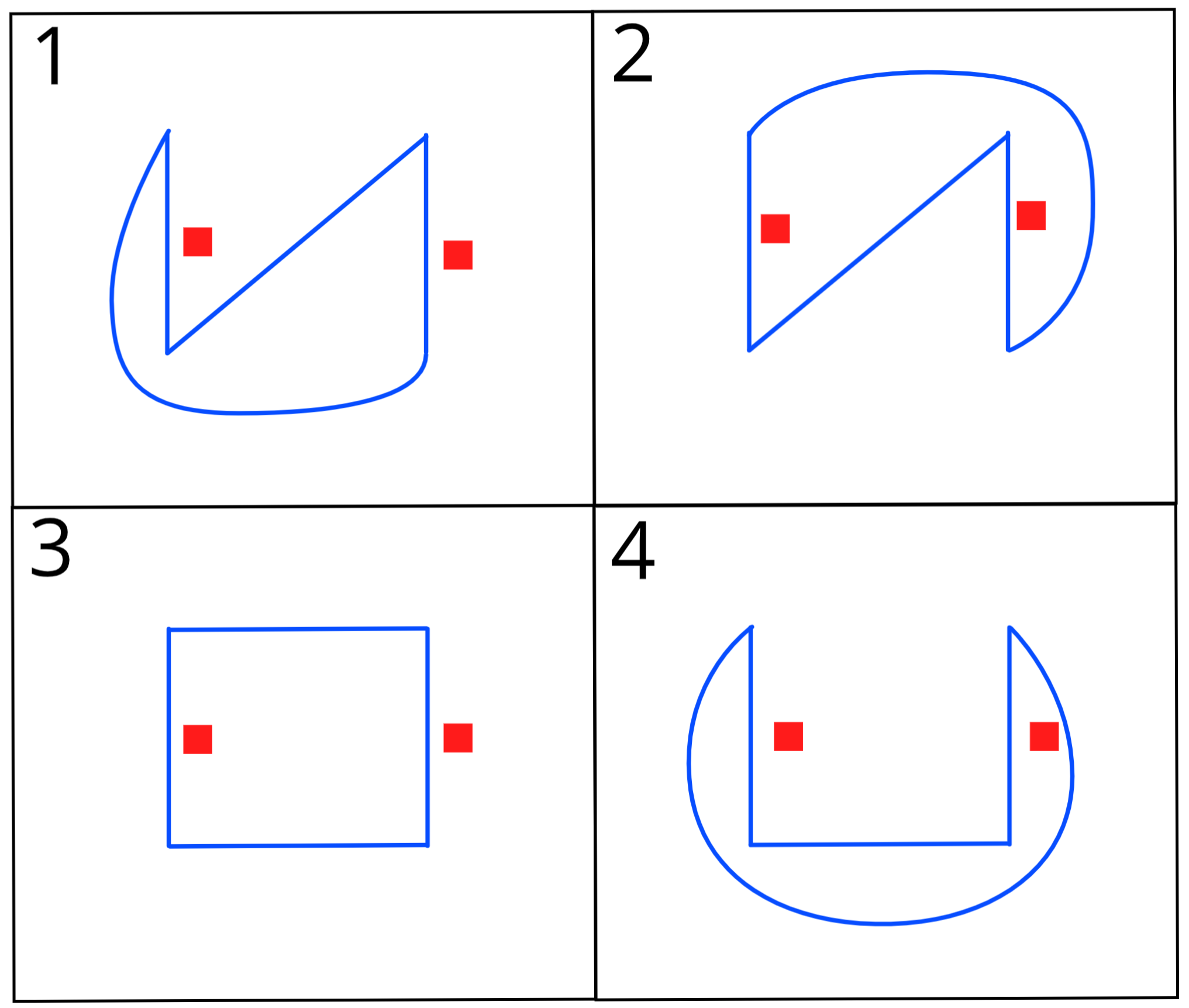}
\caption{\sl All cases in which $C$ (blue) has winding number $0$. 
The boxes (red) indicate the side to which $f$ maps each adjacent vertical.}
\label{fig:WindingCase}
\end{figure}
We can immediately rule out Cases 3 and 4 
because they require that one of the verticals be upward, 
while the other downward. This is impossible 
because, by construction, all verticals in any given region necessarily go in the same direction ($v=-f_{i,k}$). 
On the other hand, in Cases 1 and 2 the diagonal implies the existence of some intermediate vertical in the aforementioned region. 
However, by assumption, no such intermediate vertical exists. 
Therefore, Cases 1 and 2 may be ruled out as well, and we can conclude that 
the path from Lemma \ref{lemma:LoopBehavior} has non-zero winding number.

\subsubsection{The lower bound $K$}

Using the fact that the path from Lemma \ref{lemma:LoopBehavior} 
has non-zero winding number, we obtain the constant $K$ as follows: (1) Find a nonempty subinterval of values of $x$, say $J=[x_{\min},x_{\max}]$, such that $J\times(0,1)\subset A$ contains no non-degenerate critical points, and does not intersect $B_\epsilon(x^\ast)$ 
(such a subinterval must exist because there are only finitely many critical points), (2) consider the area contained between each connected component of $\mathcal{I}\cap(J\times(0,1))$ and their respective images, (3) because the path has non-zero winding number, we know it must pass through at least one of the components above the subinterval $J$, and (4) therefore the least of these areas constitutes such a lower bound $K>0$.

This constant is an effective lower bound on the area enclosed between the path and its image, outside of $B_\epsilon(x^\ast)$. 
Now all that is left is to take $\epsilon$ 
sufficiently small that the lower bound on the area deviation inside $B_\epsilon(x^\ast)$ is less than $K$, 
and we are done.

\subsection{The error arising within $B_\epsilon(x^\ast)$ tends to $0$}

By the compactness of $A$ and the continuity of $Df$, 
the extreme value theorem guarantees there exists $M<\infty$ such that $M\leq|\det(Df(x))|$ for all $x\in A$.

Thus, the area deviation resulting from errors within $B_\epsilon(x^\ast)$ 
is bounded by $\mu(B_\epsilon(x^\ast))\sup_{x\in B_\epsilon(x^\ast)}|\det(Df)|$, 
which is bounded from above by $\mu(B_\epsilon(x^\ast))M$. 
Moreover, 
we have  
$\mu(B_\epsilon(x^\ast))\to0$ as $\epsilon\to0$, 
so we can choose $\epsilon>0$ sufficiently small  
in order to get 
$$\mu(B_\epsilon(x^\ast))<\dfrac{K}{M},$$
which implies
$$\mu(B_\epsilon(x^\ast))M<K.$$
This contradicts the existence of an integral invariant, so we can conclude $f$ has at least two fixed points.
\end{section}

\bibliographystyle{amsplain}

\bibliography{references}

\bigskip
\bigskip

\noindent \textbf{Resumen:} 
Mostramos que el teorema de punto fijo de Poincar\'e-Birkhoff puede ser probado v\'ia una extensi\'on 
del acercamiento geom\'etrico originalmente divisado por el propio Poincar\'e, 
junto con algunos resultados elementales de topolog\'ia diferencial. 
Tras un ejemplo de aplicaci\'on del teorema, 
procedemos a sistem\'aticamente construir y clasificar cierto conjunto de curvas invariantes y sus puntos cr\'iticos. 
Esta clasificaci\'on es luego utilizada para probar la correcci\'on de un procedimiento 
que garantiza la existencia de por lo menos dos puntos fijos de cualquier funci\'on {\it twist} 
de un anillo siempre que admita una integral invariante positiva. 
\bigskip
\bigskip

\noindent \textbf{Palabras clave}: Din\'amica, topolog\'ia diferencial, problema restringido de los tres cuerpos.
\bigskip
\bigskip

\noindent {Andrew Graven} \\
Department of Mathematics, 
Cornell University \\
301 Tower Rd, 
 Ithaca, NY 14853\\
  United States \\
{ajg362@cornell.edu, \quad andrew@graven.com}
\bigskip

\noindent {John Hubbard} \\
Department of Mathematics, 
Cornell University \\
301 Tower Rd, 
 Ithaca, NY 14853\\
  United States \\
{jhh8@cornell.edu}
\medskip

\clearpage 

\thispagestyle{empty}
\cleardoublepage

\end{document}